\newtheorem{thm}{Theorem}[section]
\newtheorem{lem}[thm]{Lemma}
\newtheorem{prop}[thm]{Proposition}
\newtheorem*{thm*}{Theorem}
\theoremstyle{definition}
\newtheorem{example}[thm]{Example}
\newtheorem{Def}[thm]{Definition}
\newtheorem{rmk}[thm]{Remark}
\newcommand{\ses}[5]{ %
0 \to #1 \overset{#4}{\longrightarrow} #2 \overset{#5}{\longrightarrow} #3 \to 0 %
}
\renewcommand{\phi}{\varphi}
\def\haken#1{\underline{#1}{\raise -0.3ex\hbox{\vphantom{$#1$}\vrule}}\hspace{0.2em}}
\DeclareMathOperator{\im}{im}
\DeclareMathOperator{\rad}{Rad}
\DeclareMathOperator{\irr}{Irr}
\DeclareMathOperator{\module}{mod}
\renewcommand{\mod}{\module}
\DeclareMathOperator{\Mod}{Mod}
\DeclareMathOperator{\Hom}{Hom}
\DeclareMathOperator{\Aut}{Aut}
\DeclareMathOperator{\Ext}{Ext}
\renewcommand{\hom}{\Hom}
\DeclareMathOperator{\res}{res}
\DeclareMathOperator{\ind}{ind}
\DeclareMathOperator{\ql}{ql}
\DeclareMathOperator{\proj}{Proj}
\DeclareMathOperator{\maxspec}{Maxspec}
\DeclareMathOperator{\sl2}{\mathfrak{sl}(2)}
\DeclareMathOperator{\Usl2}{U_0(\mathfrak{sl}(2))}
\DeclareMathOperator{\id}{id}
\DeclareMathAlphabet{\mathpzc}{OT1}{pzc}{m}{it}
\DeclareMathOperator{\fun}{\mathpzc{Fun}}
\DeclareMathOperator{\funop}{\mathpzc{Fun}^{op}}
\DeclareMathOperator{\mmod}{mmod}
\newcommand{\N}{\mathbb{N}}
\newcommand{\Z}{\mathbb{Z}}
\renewcommand{\P}{\mathbb{P}}
\newcommand{\V}{\mathcal{V}}
\newcommand{\G}{\mathcal{G}}
\newcommand{\cN}{\mathcal{N}}
\newcommand{\cM}{\mathcal{M}}
\newcommand{\cH}{\mathcal{H}}
\newcommand{\cC}{\mathcal{C}}
\newcommand{\cZ}{\mathcal{Z}}
\newcommand{\cB}{\mathcal{B}}
\newcommand{\cO}{\mathcal{O}}
\newcommand{\cK}{\mathcal{K}}
\newcommand{\fm}{\mathfrak{m}}
\newcommand{\hideproofs}{\let\proof\hiddenproofs
	\let\endproof\endhiddenproofs}
\newcommand{\sfrac}[2]{%
  \hbox{\kern 0.1em%
  \raise 0.5ex\hbox {\smaller$#1$}%
  \kern -0.1em {$/$}%
  \kern -0.15em%
  \lower 0.25ex\hbox {\smaller$#2$}}%
  \kern  0.2em}
\title{\large{AR-Components of domestic finite group schemes:\\McKay-Quivers and Ramification}}
\author{\small Dirk Kirchhoff}
\date{}
\begin{document}
\maketitle

\begin{abstract} 
For a domestic finite group scheme, we give a direct description of the Euclidean components in its Auslander-Reiten quiver
via the McKay-quiver of a finite linearly reductive subgroup scheme of $SL(2)$. Moreover, for a normal subgroup scheme $\cN$
of a finite group scheme $\G$, we show that there is a connection between the ramification indices of the restriction morphism
$\P(\V_\cN)\rightarrow\P(\V_\G)$ between their projectivized cohomological support varieties and the ranks of the tubes
in their Auslander-Reiten quivers.
\end{abstract}

\section*{Introduction}
The Auslander-Reiten quiver of a self-injective finite-dimensional algebra is a powerful tool for understanding its representation theory
via combinatorical invariants.
In this work we are mainly interested in the Auslander-Reiten components of a group algebra $k\G$ of domestic representation type
for a finite group scheme $\G$ over an algebraically closed field $k$. \\
We say that an algebra $A$ has tame representation type if it possesses infinitely many isomorphism classes of indecomposable modules
and if in each dimension almost all isomorphism classes of indecomposable modules occur in only finitely many one-parameter families.
If additionally the number of one-parameter families is uniformly bounded, we say that $A$ has domestic representation type.
The group algebra $k\G:=k[\G]^*$ of a finite group scheme $\G$ is the dual of its coordinate ring. We say that $\G$ is domestic
if its group algebra is domestic.\\
In \cite{farnsteiner2012extensions} Farnsteiner classified the domestic finite group schemes 
over an algebraically closed field of characteristic $p>2$. Any such group scheme can be associated to a so called amalgamated
polyhedral group scheme. Moreover, the non-simple blocks of an amalgamated polyhedral group scheme are Morita-equivalent
to a radical square zero
tame hereditary algebra. In this way the components of the Auslander-Reiten quiver of these group schemes
are classified abstractly. Our goal is to describe these components in a direct way by using tensor products, McKay-quivers and
ramification indices of certain morphisms.\\

We will start by describing the Euclidean components. For this purpose, we show how to extend certain almost split sequences
from a normal subgroup scheme $\cN\subseteq \G$ to almost split sequences of $\G$ if the group scheme $\G/\cN$
is linearly reductive. Moreover, for a simple $\G/\cN$-module $S$ we will show that the tensor functor $-\otimes_kS$
sends these extended almost split sequences to almost split sequences. In section 2 we will use these results to show that
for any amalgamated polyhedral group scheme $\G$ there is a finite linearly reductive subgroup scheme $\tilde{\G}\subseteq SL(2)$ 
such that the Euclidean components
of $\Gamma_s(\G)$ can be explicitly described by the McKay-quiver $\Upsilon_{L(1)}(\tilde{\G})$. 

Of great importance for the proofs of these results is the fact that the category of $\G$-modules is closed under taking tensor products
of $\G$-modules. This comes into play in the definition of the McKay-quiver, for the construction of new almost split sequences and
in the description of the Euclidean components. Thanks to this property, we are also able to introduce geometric invariants for the representation theory of $\G$.
If $\G$ is any finite group scheme, one can endow the even cohomology ring $H^\bullet(\G,k)$ with the structure of a commutative graded
$k$-algebra. Thanks to the Friedlander-Suslin-Theorem (\cite{friedlander1997cohomology}), this algebra is finitely-generated.
Therefore, the maximal ideal spectrum $\V_\G$ of $H^\bullet(\G,k)$ is an affine variety. As $H^\bullet(\G,k)$ is graded,
we can also consider its projectivized variety $\P(\V_\G)$. \\
Now let us again assume that $\cN\subseteq \G$ is a normal subgroup scheme such that $\G/\cN$ is linearly reductive. Then the ramification indices
of the restriction morphism $\P(\V_\cN)\rightarrow\P(\V_\G)$ will give upper bounds for ranks of the corresponding tubes in the
Auslander-Reiten quiver. Here a tube $\Z/(r)[A_\infty]$ of rank $r$ can be regarded as a quiver which is arranged on an infinite tube
with circumference $r$. Moreover, if $\G$ is an amalgamated polyhedral group scheme and $\cN=\G_1$ is its first Frobenius kernel, the ranks are equal to
the corresponding ramification indices. Altogether we will proof the following:
\begin{thm*}
Let $\G$ be an amalgamated polyhedral group scheme and $\Theta$ a component of the stable Auslander-Reiten quiver $\Gamma_s(\G)$. Then the following hold:
\begin{enumerate}[label={(\roman*)}]
\item If $\Theta$ is Euclidean, then there is a component $Q$ of the separated quiver $\Upsilon_{L(1)}(\tilde{\G})_s$ and a concrete
isomorphism $\Theta\cong\Z[Q]$.
\item Let $\Theta$ be a tube and $e_\Theta$ the ramification index of the restriction morphism $\P(\V_{\G_1})\rightarrow\P(\V_\G)$ at the
corresponding point $x_\Theta$. Then $\Theta\cong \Z/(e_\Theta)[A_\infty]$. 
\end{enumerate}
\end{thm*}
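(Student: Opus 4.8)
The plan is to handle the two parts separately, in both cases exploiting the normal subgroup scheme $\cN=\G_1\trianglelefteq\G$: for an amalgamated polyhedral group scheme the quotient $\G/\G_1$ is linearly reductive, acts on the projectivized support variety $\P(\V_{\G_1})$, and gives rise to the finite linearly reductive subgroup scheme $\tilde\G\subseteq SL(2)$ of Section~2. The preliminary step is to record the representation theory of $k\G_1$: by Farnsteiner's classification its non-simple blocks are radical square zero tame algebras whose stable Auslander--Reiten quiver is known explicitly --- finitely many Euclidean components and a one-parameter family of homogeneous tubes parametrized by the points of $\P(\V_{\G_1})$. Throughout I will use the two results announced in the introduction: that an almost split sequence of $\cN$-modules extends to an almost split sequence of $\G$-modules when $\G/\cN$ is linearly reductive (Section~1), and that tensoring such an extended sequence with a simple $\G/\cN$-module again yields an almost split sequence of $\G$-modules (Section~2).

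For~(i): fix a Euclidean component $\Theta$ of $\Gamma_s(\G)$ and a module $M$ sitting at a chosen boundary position of $\Theta$; its restriction to $\G_1$ lies in a Euclidean component of $\Gamma_s(\G_1)$ of known shape. Applying the extension procedure of Section~1 and then the tensor functors $-\otimes_k S$ over the simple $\G/\cN$-modules $S$ produces a family of almost split sequences of $\G$-modules whose terms are indexed by the vertices reachable from $M$. The essential point is that the indecomposable summands occurring in $(-\otimes_k S)$ of an extended sequence are dictated by the decomposition of $S\otimes L(1)$, i.e.\ by the McKay numbers of $\tilde\G$, so the combinatorics produced is that of the McKay quiver $\Upsilon_{L(1)}(\tilde\G)$; the ``source/sink'' bipartition inherent to almost split sequences over a radical square zero algebra then replaces $\Upsilon_{L(1)}(\tilde\G)$ by its separated quiver. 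Reading off these data yields a morphism of translation quivers $\Z[Q]\to\Theta$, where $Q$ is the component of $\Upsilon_{L(1)}(\tilde\G)_s$ selected by $M$, and one finishes by checking it is an isomorphism --- comparing with the abstract classification of $\Theta$ coming from the Morita equivalence with a radical square zero tame hereditary algebra, so that both sides are $\Z\tilde\Delta$ for one and the same Euclidean diagram.

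For~(ii): let $\Theta$ be a tube, $x_\Theta\in\P(\V_\G)$ the point attached to it by Farnsteiner's correspondence between tubes and points of the projectivized support variety, and $y\in\P(\V_{\G_1})$ a point lying over $x_\Theta$ under the restriction morphism $\P(\V_{\G_1})\to\P(\V_\G)$. Since $k\G$ is free over $k\G_1$, both $\Omega$ and the Nakayama functor --- hence $\tau_\G=\nu_\G\Omega_\G^2$ --- are compatible with restriction along $\G_1\hookrightarrow\G$ up to projective summands and up to a twist by a one-dimensional $\G/\G_1$-module. Consequently $\res$ carries $\Theta$ into the homogeneous tube of $\Gamma_s(\G_1)$ at $y$, and two modules of $\Theta$ have isomorphic restrictions exactly when they differ by tensoring with a one-dimensional $\G/\G_1$-module trivial on the stabilizer of $y$. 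That stabilizer is cyclic --- point stabilizers for a finite group acting on $\P^1$ always are --- of order equal to the local ramification index $e_\Theta$ of the quotient map $\P(\V_{\G_1})\to\P(\V_\G)=\P(\V_{\G_1})/(\G/\G_1)$ at $x_\Theta$; counting the $\G/\G_1$-twists of a mouth module of $\Theta$ then shows $\tau_\G^{e_\Theta}$ fixes it while no smaller power does, whence $\Theta\cong\Z/(e_\Theta)[A_\infty]$.

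I expect the main obstacle in~(i) to be the global rigidity: the extension-and-tensor machinery only determines $\Theta$ locally, so one must rule out that the construction ``wraps around'' a finite cylinder instead of filling out $\Z[Q]$; linear reductivity of $\G/\G_1$ (making the relevant module categories semisimple over it) together with the a priori knowledge that $\Gamma_s(\G)$ carries no further Euclidean components is what should close this gap. In~(ii) the delicate point is to obtain the ramification index exactly, rather than an orbit length or a multiple of it, which requires tracking the twist character through $\tau_\G$ precisely and relating the order of the stabilizer of $y$ to the ramification data of the separable quotient morphism $\P^1\to\P^1/(\G/\G_1)$ --- here the hypotheses $p>2$ and the linear reductivity (hence separability) of the quotient are exactly what is needed.
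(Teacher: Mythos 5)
Your plan coincides in essence with the paper's proof: extend the $SL(2)_1$-almost split sequences ending in the modules $V(n,l)$ to $\G$, tensor with the simple $\G/\G_1$-modules, read the arrow multiplicities off the McKay decomposition, and write down an explicit morphism of translation quivers $\Z[Q]\rightarrow\Theta$. The two points you leave implicit are exactly the ones that carry the weight in the paper: the identification $\irr_{SL(2)_1}(V(n+1,l),V(n,l))\cong L(1)^{[1]}$ as a $\G$-module (without it the middle terms are not governed by the McKay numbers), and the fact that \emph{every} module of the component is of the form $V(n,l)\otimes_k S_j$ with these tensor products indecomposable (this is what gives surjectivity directly, rather than your fallback comparison with the abstract classification, which would also work but needs the path argument anyway to know which component $Q$ occurs).

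\textbf{Part (ii).} Here there is a genuine gap. Your mechanism is: restrict to $\G_1$, where the tube is homogeneous, and recover the rank of $\Theta$ by counting twists of a mouth module by one-dimensional $\G/\G_1$-modules trivial on the stabilizer of $y$, concluding that $\tau_\G^{e_\Theta}$ is the first power fixing it. First, the key relation $\tau_\G(M)\cong M\otimes_k\chi$ for a character $\chi$ of $\G/\G_1$ is never justified: compatibility of $\Omega$ and the Nakayama functor with restriction only tells you that $\tau_{\G_1}$ fixes the summands of $\res_{\G_1}^\G M$; it does not single out which $\G$-module lying over them $\tau_\G(M)$ is. That identification is precisely the Clifford-theoretic content one has to prove. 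Second, and decisively, the count you propose gives the wrong answer in the reduced cases: for $\tilde{\G}$ binary icosahedral the group $(\G/\G_1)(k)$ is perfect, so it has no nontrivial one-dimensional modules at all, and your count would force every tube to be homogeneous, whereas exceptional tubes of ranks $2,3,5$ occur, matching the ramification indices of $\P(\V_{\G_1})\rightarrow\P(\V_\G)$; in the tetrahedral and octahedral cases the character group is likewise too small to reach ranks $3$ and $4$. The characters that actually control the rank are those of the cyclic stabilizer of the point $x_\Xi$, not of $\G/\G_1$, and to use them one must pass to the stabilizer subgroup scheme $\G_\Theta$ and apply Clifford theory for AR-components (induction $\ind_{\G_\Theta}^\G$ is an isomorphism of stable translation quivers onto $\Theta$), reducing to the cyclic case, which is then settled by comparison with the explicit classification of indecomposables; the non-reduced binary cyclic and dihedral amalgams, where no constant group acts on $\P^1$ and your stabilizer picture does not apply, also have to be handled by that explicit comparison. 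Finally, the equality between the stabilizer order and $e_{x_\Xi}$ requires the scheme-theoretic statement that the stabilizer at a simple point is $\mu_{(n)}$ with $n=e_{x_\Xi}$, proved via completed local rings and invariants; the classical fact about finite groups acting on $\P^1$ does not cover the infinitesimal part of the stabilizer.
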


 There seems to be a connection to a result of Crawley-Boevey (\cite{crawley1988tame}), which states
that a finite-dimensional tame algebra has only finitely many non\-/homogeneous tubes. On the other side, the restriction morphism $\P(\V_\cN)\rightarrow\P(\V_\G)$
is finite and has constant ramification on an open dense subset of $\P(\V_\cN)$. 
Therefore, there are only finitely many exceptional ramification points.
In our situation, all but finitely many points will be unramified and 
a tube can only be non-homogeneous, if it belongs to the image of a ramification point.

\section{Induction of almost split sequences}
Let $k$ be an algebraically closed field.
Given a finite group scheme $\G$ with normal subgroup scheme $\cN$, we want to investigate conditions under
which the induction functor $\ind_\cN^\G:\mod\cN\rightarrow\mod\G$ sends an almost split exact sequence to a direct sum of
almost split exact sequences.
We start by giving an overview of the functorial approach to almost split sequences. For further details we refer the reader to 
\cite[IV.6]{assem2006elements}.
Let $A$ be a finite-dimensional $k$-algebra. Denote by $\funop A$ and $\fun A$ the categories of contravariant and covariant
$k$-linear functors from $\mod A$ to $\mod k$. A functor $F$ in $\funop A$ is finitely generated if the functor $F$
is isomorphic to a quotient of $\hom_A(-,M)$ for some $M\in\mod A$. A functor $F$ in $\funop A$ is finitely presented if there is an exact sequence
\[\hom_A(-,M)\rightarrow\hom_A(-,N)\rightarrow F\rightarrow 0\]
of functors in $\funop A$ for some $M,N\in\mod A$. 
The full subcategory of $\funop A$ consisting of the finitely presented functors will be denoted by
$\mmod A$.
Up to isomorphism the finitely generated
projective functors in $\funop A$ are exactly the functors of the form
$\hom_A(-,M)$. Such a functor is indecomposable if and only if the $A$-module $M$ is
indecomposable.\\
For $A$-modules $M$ and $N$ we define the radical of $\hom_A(M,N)$ as 
\[\rad_A(M,N):=\{\phi\in\hom_A(M,N)\;\vert\;\phi\text{ is not an isomorphism}\}.\]
Then $\rad_A(-,M)$ is a subfunctor of $\hom_A(-,M)$ and we define the functor $S^M:=\hom_A(-,M)/\rad_A(-,M)$.
Up to isomorphism the simple functors in $\funop A$ are exactly the functors of the form
 $S^M$ with an indecomposable $A$-module $M$.
 The projective cover of $S^M$ is $\hom_A(-,M)$.
 Let $N$ be an indecomposable $A$-module. An $A$-module homomorphism $g:M\rightarrow N$ is (minimal) right almost split if
 and only if the induced exact sequence
 \[\hom_A(-,M)\rightarrow\hom_A(-,N)\rightarrow S^N\rightarrow 0\]
 of functors in $\funop A$ is a (minimal) projective presentation of $S^N$. A functor $F:\mod A\rightarrow \mod B$
 induces a functor $F:\mmod A\rightarrow \mmod B$ via $F(\hom_A(-,M))=\hom_B(-,F(M))$.
 There are dual notions and results for left almost split morphisms and
 functors in $\fun A$.\\
 
 In \cite[3.8]{reiten1985skew}  Reiten and Riedtmann used this functorial approach to show
 that for a skew group algebra $A*G$, with a finite group $G$, the induction functor $\ind_1^G:\mod A\rightarrow \mod A*G$ and the restriction functor
 $\res_1^G:\mod A*G\rightarrow \mod A$ send almost split sequences
 to direct sums of almost split sequences if the group order of $G$ is invertible in $k$. 
 Their proof relied on the following properties of the involved functors:
 \begin{itemize}
 	\item[(A)] 
 	\begin{enumerate}[label={(\roman*)}]
 		\item There is a split monomorphism of functors $\id_{\mod A}\rightarrow \res_1^G\ind_1^G$.
 		\item There is a split epimorphism of functors $\ind_1^G\res_1^G\rightarrow \id_{\mod A*G}$.
 	\end{enumerate}
 	\item[(B)] $(\ind_1^G,\res_1^G)$ and $(\res_1^G,\ind_1^G)$ are adjoint pairs of functors.
 	\item[(\~C)] There is a finite group $G$ acting on $\mod A$ such that for every $A$-module $M$ there is a
 	decomposition $\res_1^G\ind_1^G M=\bigoplus_{g\in G}M^g$ and if $\phi:M\rightarrow N$ is $A$-linear, then
 	$\res_1^G\ind_1^G(\phi)=(g.\phi)_{g\in G}:\bigoplus_{g\in G}M^g\rightarrow\bigoplus_{g\in G}N^g$.
 \end{itemize}
 In \cite[3.5]{reiten1985skew} it was shown that these properties also hold for the induced functors $\ind_1^G:\mmod A\rightarrow\mmod A*G$
 and $\res_1^G:\mmod A*G\rightarrow \mmod A$ and that they imply the following property:
 \begin{itemize}
 	\item[(C)] $\ind_1^G:\mmod A\rightarrow\mmod A*G$
 	and $\res_1^G:\mmod A*G\rightarrow \mmod A$ preserve semisimple objects and projective covers.
 \end{itemize}
 We want to apply the ideas of the proof in the context of group algebras of finite group schemes.
 If $\G$ is a finite group scheme, then the dual of its coordinate ring $k\G:=(k[\G])^*$ is called
 the group algebra of $\G$. In this situation we will not always have analogous results for the induction and restriction functor. For example, in
   \cite[3.1.4]{farnsteiner2009group} it was already shown that for the restriction functor this is possible if and only if the
   ending term of the almost split sequence fulfils a certain regularity property. \\
   Let $\cN$ be a normal subgroup scheme of $\G$. The $\cN$-modules which will be of our interest are restrictions of
   $\G$-modules. To obtain an analogue of property (\~C) we will use the following result:
   \begin{lem}\label{inducing a G-module}
   Let $\G$ be a finite group scheme and $\cN$ be a normal subgroup scheme of $\G$.
   Let $M$ be a $\G$-module. Then there is a $\G$-linear isomorphism
   \[\psi_M:\ind_\cN^\G\res_\cN^\G M\rightarrow M\otimes_kk(\G/\cN)\]
   which is natural in $M$.\\
   In particular, $\res_\cN^\G\ind_\cN^\G M\cong M^n$ where $n=\dim_kk(\G/\cN)$.
   \end{lem}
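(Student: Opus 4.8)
The plan is to construct the natural isomorphism $\psi_M$ using the tensor identity for induction in the setting of finite group schemes. Recall that $\ind_\cN^\G$ is right adjoint to $\res_\cN^\G$ and can be realized as $\ind_\cN^\G V = (V \otimes_k k\G)^\cN$ (or dually via $k[\G]$), where $k\G$ carries suitable commuting module structures. The key input is the \emph{tensor identity}: for a $\G$-module $M$ and an $\cN$-module $V$, one has a natural $\G$-linear isomorphism $\ind_\cN^\G(\res_\cN^\G M \otimes_k V) \cong M \otimes_k \ind_\cN^\G V$. I would either cite this (it is standard, e.g. from Jantzen's book on representations of algebraic groups, adapted to finite group schemes) or prove it directly by writing both sides out in terms of $k\G$ and checking that the obvious map $m \otimes (v \otimes x) \mapsto$ (image under the coproduct) is a bijection intertwining the $\G$-actions.

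The first step is to apply the tensor identity with $V = k$, the trivial $\cN$-module: this gives $\ind_\cN^\G \res_\cN^\G M \cong M \otimes_k \ind_\cN^\G k$. The second step is to identify $\ind_\cN^\G k$ with $k(\G/\cN)$ as a $\G$-module. Since $\cN$ is normal, $\G/\cN$ is a finite group scheme, and $\ind_\cN^\G k = (k\G)^\cN$ on the appropriate side is exactly the group algebra $k(\G/\cN)$, viewed as a $\G$-module via the quotient map $\G \to \G/\cN$ followed by the left regular action; equivalently $k[\G]^\cN = k[\G/\cN]$ dualizes to this statement. Concatenating the two isomorphisms yields $\psi_M$, and naturality in $M$ follows because both the tensor identity isomorphism and the identification $\ind_\cN^\G k \cong k(\G/\cN)$ are natural (the latter trivially, being independent of $M$, and the former by its construction as a map of bifunctors).

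For the final assertion, I would apply $\res_\cN^\G$ to $\psi_M$ to get $\res_\cN^\G \ind_\cN^\G M \cong \res_\cN^\G(M \otimes_k k(\G/\cN)) \cong (\res_\cN^\G M) \otimes_k \res_\cN^\G k(\G/\cN)$. Now $\cN$ acts trivially on $k(\G/\cN)$ — this is precisely the content of $k(\G/\cN)$ being a module inflated from $\G/\cN$ — so as an $\cN$-module $k(\G/\cN)$ is just $k^n$ with $n = \dim_k k(\G/\cN) = \dim_k k[\G/\cN]$. Hence $\res_\cN^\G \ind_\cN^\G M \cong (\res_\cN^\G M)^n$, as claimed.

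The main obstacle is getting the module structures in the tensor identity exactly right: one must be careful about which of the several commuting $k\G$-actions on $k\G$ (left regular, right regular, conjugation) is used to induce and which survives as the residual $\G$-action, and about the fact that $\cN$ being \emph{normal} is what makes $k(\G/\cN)$ a $\G$-module at all and makes the $\cN$-action on it trivial. I expect the verification that the candidate map is $\G$-linear — rather than merely $k$-linear and bijective — to be the one genuinely fiddly point; everything else is formal manipulation of adjunctions and Hopf-algebraic identities.
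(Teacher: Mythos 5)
Your proposal is correct and follows essentially the same route as the paper, whose proof is precisely ``this follows directly from the tensor identity'' together with the explicit map $a\otimes m\mapsto\sum_{(a)}a_{(1)}m\otimes\pi(a_{(2)})$ realizing it. The only point to watch is the convention for $\ind_\cN^\G$: with the tensor-product (group-algebra) realization $k\G\otimes_{k\cN}-$ used implicitly by the paper one gets $\ind_\cN^\G k\cong k\G/k\G(k\cN)^+ = k(\G/\cN)$ on the nose, whereas with Jantzen-style coinduction via $k[\G]$ one first gets the coordinate ring $k[\G/\cN]$ and must then invoke that induction and coinduction agree (or that finite-dimensional Hopf algebras are Frobenius) to pass to $k(\G/\cN)$ -- a harmless but worthwhile clarification.
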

   \begin{proof}
   This follows directly from the tensor identity. Alternatively, exactly as in the proof of \cite[5.1]{kirchhoff2015domestic} one can show that the map
   		\[\psi_M:\ind_\cN^\G M\rightarrow M\otimes_kk(\G/\cN),\;a\otimes m\mapsto \sum_{(a)}a_{(1)}m\otimes \pi(a_{(2)})\]
   		is an isomorphism of $\G$-modules, where $\pi:k\G\rightarrow k(\G/\cN)$ is the canonical projection.
   		A direct computation shows that it is natural in $M$.
   \end{proof}

  \begin{prop}\label{ind is direct sum of AR-seq}
  	 Let $\G$ be a finite group scheme and $\cN$ be a normal subgroup scheme such that $\G/\cN$ is linearly reductive. 
  	 Let $X, Y$ and $E$ be $\cN$-modules
  	 such that
  	 \begin{enumerate}
  	 	\item every indecomposable direct summand of the modules $X, Y$ and $E$ is the restriction of a $\G$-module, and
  	 	\item $\mathcal{E}:\ses{X}{E}{Y}{\phi}{\psi}$ is an almost split exact sequence of $\cN$-modules.
  	 \end{enumerate}
  	 Then $\ind_\cN^\G\mathcal{E}$ is a direct sum of almost split exact sequences.
  \end{prop}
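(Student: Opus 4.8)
The plan is to adapt the functorial argument of Reiten and Riedtmann \cite[3.5, 3.8]{reiten1985skew} from the skew group algebra case to the adjoint pair $(\ind_\cN^\G,\res_\cN^\G)$: Lemma~\ref{inducing a G-module} will play the role of their decomposition property~(\~C), and the linear reductivity of $\G/\cN$ will play the role of the invertibility of the group order. Throughout one works inside the full subcategory of $\mod\cN$ of restrictions of $\G$-modules, which by assumption~(1) contains every indecomposable summand of $X$, $Y$ and $E$.

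First I would verify the module-level analogues of properties (A), (B) and~(\~C). Since $k\cN$ is a Hopf subalgebra of $k\G$, the algebra $k\G$ is free as a $k\cN$-module; hence $\ind_\cN^\G=k\G\otimes_{k\cN}-$ is exact, and $k\cN\subseteq k\G$ is a Frobenius extension, so $\ind_\cN^\G$ is simultaneously a left and a right adjoint of $\res_\cN^\G$ — this is property~(B). Lemma~\ref{inducing a G-module} supplies the analogue of~(\~C): the natural isomorphisms $\ind_\cN^\G\res_\cN^\G M\cong M\otimes_k k(\G/\cN)$ and $\res_\cN^\G\ind_\cN^\G M\cong M^n$ ($n=\dim_k k(\G/\cN)$) for $\G$-modules $M$, the latter exhibiting $\res_\cN^\G\ind_\cN^\G$ as an untwisted $n$-fold power because $\cN$ acts trivially on $k(\G/\cN)$. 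For property~(A) one uses that $\G/\cN$ is linearly reductive: then $k(\G/\cN)$ is a semisimple Hopf algebra and carries an integral $\Lambda$ with $\varepsilon(\Lambda)=1$, equivalently the trivial module $k$ is a $\G/\cN$-direct summand of $k(\G/\cN)$; tensoring the resulting split monomorphism $k\hookrightarrow k(\G/\cN)$ and split epimorphism $k(\G/\cN)\twoheadrightarrow k$ with $M$ and transporting them through Lemma~\ref{inducing a G-module} produces the split monomorphism $\id\to\res_\cN^\G\ind_\cN^\G$ and split epimorphism $\ind_\cN^\G\res_\cN^\G\to\id$ of property~(A). With (A), (B) and~(\~C) available, the argument of \cite[3.5]{reiten1985skew} carries over and shows that the induced functor $\ind_\cN^\G\colon\mmod\cN\to\mmod\G$ is exact and satisfies property~(C): it preserves semisimple objects and projective covers, and hence — together with exactness — also minimal projective presentations and radical maps.

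Granting this, the proposition follows as in \cite[3.8]{reiten1985skew}. Since $\mathcal{E}$ is almost split, $Y$ is indecomposable and non-projective, and by the functorial description recalled above $\psi$ corresponds to a minimal projective presentation $\hom_\cN(-,E)\to\hom_\cN(-,Y)\to S^Y\to 0$ of the simple functor $S^Y$ in $\mmod\cN$. Applying the exact functor $\ind_\cN^\G$ and using $\ind_\cN^\G\hom_\cN(-,M)=\hom_\G(-,\ind_\cN^\G M)$ gives, by property~(C), a minimal projective presentation
\[
\hom_\G(-,\ind_\cN^\G E)\longrightarrow\hom_\G(-,\ind_\cN^\G Y)\longrightarrow\ind_\cN^\G S^Y\longrightarrow 0
\]
of the semisimple functor $\ind_\cN^\G S^Y\cong\bigoplus_j S^{Y_j}$, with each $Y_j\in\mod\G$ indecomposable. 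As $\ind_\cN^\G$ is exact on modules, $\ind_\cN^\G\psi$ is an epimorphism, whereas the minimal right almost split map onto a projective indecomposable is a non-surjective radical inclusion; comparing the displayed presentation with the direct sum over $j$ of the minimal projective presentations of the $S^{Y_j}$ therefore forces every $Y_j$ to be non-projective, identifies $\ind_\cN^\G Y\cong\bigoplus_j Y_j$ and $\ind_\cN^\G E\cong\bigoplus_j E_j$, and (by Yoneda) realises $\ind_\cN^\G\psi$ as the direct sum $\bigoplus_j\psi_j$, where $\mathcal{E}_j\colon 0\to\tau Y_j\to E_j\xrightarrow{\psi_j}Y_j\to 0$ is the almost split sequence ending in $Y_j$. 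Comparing kernels — once more by exactness of $\ind_\cN^\G$ — gives $\ind_\cN^\G X\cong\bigoplus_j\tau Y_j$, and so $\ind_\cN^\G\mathcal{E}\cong\bigoplus_j\mathcal{E}_j$ is a direct sum of almost split exact sequences.

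The crux of the argument is the first step: confirming that the Reiten--Riedtmann machinery genuinely transfers, and in particular establishing property~(A) — that the splittings obtained from the integral of $k(\G/\cN)$ and from the triviality of the $\cN$-action on $k(\G/\cN)$ can be arranged to be the units and counits of the two adjunctions, and hence are honest split natural transformations. Everything downstream of property~(C) — exactness of the induced functors, preservation of projective covers and radical maps, and the final decomposition — is then a routine transcription of \cite[3.5, 3.8]{reiten1985skew}; note that the preservation of radical maps is precisely what prevents a spurious split summand $0\to R\to R\to 0$ from occurring in $\ind_\cN^\G\mathcal{E}$.
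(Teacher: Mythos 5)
Your proposal is correct and follows essentially the same route as the paper: verify the Reiten--Riedtmann properties (A), (B) and (\~C) for the pair $(\ind_\cN^\G,\res_\cN^\G)$ on the subcategory of restrictions of $\G$-modules via Lemma~\ref{inducing a G-module}, and then transcribe the functorial argument of \cite{reiten1985skew} to get that $\ind_\cN^\G\psi$ is a direct sum of minimal (right) almost split maps. The only differences are in bookkeeping: for (A)(ii) you argue directly with the integral of $k(\G/\cN)$ (a relative Maschke splitting) where the paper cites Doi's separability of the Hopf--Galois extension, and for (B) you assert the Frobenius property that the paper justifies via Takeuchi and Morita --- note that this uses normality of $\cN$ (a general Hopf subalgebra inclusion is only $\beta$-Frobenius), so a citation there would be appropriate.
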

  \begin{proof}
  Denote by $\cC$ the full subcategory of $\mod \cN$ consisting of direct sums of indecomposable $\cN$-modules which are restrictions of $\G$-modules.
  Due to \ref{inducing a G-module}, the properties (A)(i) and (\~C) hold in $\cC$ if $G$ is a group acting trivially on $\cC$. 
    As $\G/\cN$ is linearly reductive, the $k(\G/\cN)$-Galois extension $k\G:k\cN$ is separable by \cite[3.15]{doi1989hopf}. This yields property (A)(ii).
    Thanks to \cite[1.7(5)]{takeuchi1981hopf}, the ring extension $k\G:k\cN$ is a free Frobenius extension of first kind, i.e.
    \begin{enumerate}[label={(\alph*)}]
    \item $k\G$ is a finitely generated free $k\cN$-module, and
    \item there is a $(k\G,k\cN)$-bimodule isomorphism $k\G\rightarrow\hom_\cN(k\G,k\cN)$.
    \end{enumerate}
    Hence, by \cite[2.1]{morita1965adjoint} the induction and coinduction functors are equivalent, so that property (B) holds.\\
  	As $\psi$ is minimal right almost split, the exact sequence
  	\[\hom_\cN(-,E)\rightarrow\hom_\cN(-,Y)\rightarrow S^{Y}\rightarrow 0\]
  	is a minimal projective presentation of $S^Y$. We can now apply the arguments of the proof
  	of \cite[3.6]{reiten1985skew} by keeping the following things in mind:
  	\begin{itemize}
  		\item The simple functors which occur in our situation are all of the form $S^V$ with $V$ being 
  		an indecomposable module in $\cC$.
  		\item The projective covers which occur in our situation are all of the form $\hom_\cN(-,V)$ with $V\in\cC$.
  	\end{itemize}
  	Therefore, the functor $\ind_\cN^\G S^Y$ is semisimple and the exact sequence
  	
  	\[\ind_\cN^\G\hom_\cN(-,E)\rightarrow\ind_\cN^\G\hom_\cN(-,Y)\rightarrow \ind_\cN^\G S^{Y}\rightarrow 0\]
  	is a minimal projective presentation of $\ind_\cN^\G S^Y\cong S^{\ind_\cN^\G Y}$.
  	Hence, $\ind_\cN^\G\psi$ is a direct sum of minimal right almost split homomorphisms. \\
  	Dually, one can show that $\ind_\cN^\G\phi$ is a direct sum of minimal left almost split homomorphisms.
  \end{proof}

Let $\G$ be a group scheme and $X$ and $M$ be $\G$-modules. We define the $k$-vector spaces
\[\rad_\G^2(X,M):=\{\alpha\;\vert\;\exists Z\in\Mod\G,\phi\in\rad_\G(X,Z),\psi\in\rad_\G(Z,M):
\alpha=\psi\circ\phi\}\]
and $\irr_\G(X,M):=\rad_\G(X,M)/\rad_\G^2(X,M)$.
\begin{lem}
	Let $\G$ be a reduced group scheme, $\cN$ be a normal subgroup scheme of $\G$ and
	$X$ and $M$ be $\G$-modules.
	Then $\rad_\cN(X,M)$ and $\rad_\cN^2(X,M)$ are $\G$-submodules of $\Hom_\cN(X,M)$.
\end{lem}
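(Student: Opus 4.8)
Since the hypothesis singles out that $\G$ is \emph{reduced}, the natural strategy is to check stability under the $k$-rational points $g\in\G(k)$ and then invoke the fact that, over the algebraically closed field $k$, a $k$-subspace of a $\G$-module is a $\G$-submodule as soon as it is stable under every $g\in\G(k)$ — which holds because $\G$ is reduced, so that no nonzero element of $k[\G]$ vanishes on all of $\G(k)$ (equivalently, the ``off-diagonal'' matrix coefficients of the coaction that would obstruct subcomodule-ness are forced to be zero).

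First I would pin down the $\G$-action in play. As $X$ and $M$ are $\G$-modules, $\Hom_k(X,M)$ carries the conjugation action of $\G$, and since $\cN$ is a normal subgroup scheme of $\G$ the subspace $\Hom_\cN(X,M)=\Hom_k(X,M)^\cN$ is a $\G$-submodule; the same applies to $\Hom_\cN(X,Z)$ and $\Hom_\cN(Z,M)$ for every $Z\in\Mod\G$. For $g\in\G(k)$ write $c_g$ for the induced linear automorphism, $(c_g\phi)(x)=g\cdot(\phi(g^{-1}\cdot x))$. A routine computation yields $c_g(\id)=\id$, $c_g(\psi\circ\phi)=c_g(\psi)\circ c_g(\phi)$ whenever the composite is defined, and $c_g\circ c_h=c_{gh}$; thus each $c_g$ is invertible and respects composition and identities. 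Hence $c_g$ sends isomorphisms to isomorphisms and non-isomorphisms to non-isomorphisms, so it preserves $\rad_\cN(X,M)$ — and likewise $\rad_\cN(X,Z)$ and $\rad_\cN(Z,M)$. For the square: if $\alpha=\psi\circ\phi$ with $Z\in\Mod\G$, $\phi\in\rad_\cN(X,Z)$ and $\psi\in\rad_\cN(Z,M)$, then $c_g(\alpha)=c_g(\psi)\circ c_g(\phi)$ is again a composite of radical maps factoring through the \emph{same} module $Z\in\Mod\G$, so $c_g$ preserves $\rad_\cN^2(X,M)$ as well.

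Therefore $\rad_\cN(X,M)$ and $\rad_\cN^2(X,M)$ are $k$-subspaces of the $\G$-module $\Hom_\cN(X,M)$ stable under every $g\in\G(k)$, and by the fact recalled at the outset they are $\G$-submodules. The one step I expect to need real care is this reduction to $k$-points: it is exactly where reducedness of $\G$ is used — nothing about $\cN$ beyond normality enters — and one should note it stays valid when $\Hom_\cN(X,M)$ is infinite-dimensional, since only the coaction of a single element is ever invoked. A minor accompanying point is to record that $\rad_\cN^2(X,M)$ really is a $k$-subspace, which belongs to the standard theory of the radical of a module category.
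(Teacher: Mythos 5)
Your overall strategy is the same as the paper's: since $\G$ is reduced, it suffices to check stability under $\G(k)$, and for $\rad_\cN(X,M)$ your argument (conjugation by $g$ is invertible, so it sends non-isomorphisms to non-isomorphisms) is exactly the paper's. The gap is in the treatment of $\rad_\cN^2(X,M)$. By the definition in force here (the displayed definition of $\rad^2$ applied with $\cN$ in place of $\G$), an element of $\rad_\cN^2(X,M)$ is a composite $\beta\circ\alpha$ with $\alpha\in\rad_\cN(X,Z)$, $\beta\in\rad_\cN(Z,M)$ where the intermediate object $Z$ is merely an $\cN$-module; it need not carry any $\G$-structure. Your argument writes $c_g(\beta\circ\alpha)=c_g(\beta)\circ c_g(\alpha)$ ``through the same module $Z\in\Mod\G$'', but when $Z$ is not a $\G$-module the conjugates $c_g(\alpha)$ and $c_g(\beta)$ are simply not defined, so your computation only covers composites factoring through restrictions of $\G$-modules — a priori a proper subset of $\rad_\cN^2(X,M)$. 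This is precisely the one nontrivial point of the lemma, and it is where the hypotheses on $\cN$ (normality, so that twisting the $\cN$-action by $g^{-1}$ again gives an $\cN$-module) are really used.

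The paper closes this gap by a twisting argument: given $\phi=\beta\circ\alpha$ through an $\cN$-module $Z$, one introduces the twisted $\cN$-module $Z^{g}$ (same space, action $h.z=h^{g^{-1}}z$) and the $\cN$-linear maps $\tilde{\alpha}:X\rightarrow Z^{g},\ x\mapsto\alpha(g^{-1}x)$ and $\tilde{\beta}:Z^{g}\rightarrow M,\ z\mapsto g\beta(z)$; then $g.\phi=\tilde{\beta}\circ\tilde{\alpha}$, and $\tilde{\alpha}$, $\tilde{\beta}$ are non-isomorphisms because an inverse of either would produce an inverse of $\alpha$ resp.\ $\beta$. To repair your proof you would need to add exactly this step (or first prove that every element of $\rad_\cN^2(X,M)$ factors through the restriction of a $\G$-module, which is not available in this generality). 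The rest of your write-up — the conjugation action on $\Hom_\cN(X,M)$ being well defined by normality, and the reduction to $k$-points via reducedness — is fine and agrees with the paper.
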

\begin{proof}
	As $\G$ is reduced, we only need to prove that $\rad_\cN(X,M)$ and $\rad_\cN^2(X,M)$ are
	 $\G(k)$-submodules of $\Hom_\cN(X,M)$ (see \cite[Remark after 2.8]{jantzen2007representations}). \\
	 Let $g\in\G(k)$ and $\phi\in\rad_\cN(X,M)$. Assume that $g.\phi$ is an isomorphism
	 with inverse $\psi$. Then $g^{-1}.\psi$ is an inverse of $\phi$, a contradiction. \\
	Let $g\in\G(k)$ and $\phi\in\rad_\cN^2(X,M)$. Then there is an $\cN$-module $Z$ and homomorphisms $\alpha\in\rad_\cN(X,Z)$, 
	$\beta\in\rad_\cN(Z,M)$ such that $\phi=\beta\circ\alpha$. Denote by $Z^g$ the $\cN$-module
	$Z$ with action twisted by $g^{-1}$, i.e. $h.z=h^{g^{-1}}z$ for $h\in k\cN$ and $z\in Z^g$.
	As $X$ and $M$ are $\G$-modules, we can define the $\cN$-linear maps
	\[\tilde{\alpha}:X\rightarrow Z^{g},\; x\mapsto \alpha(g^{-1}x)\text{ and }
	\tilde{\beta}:Z^{g}\rightarrow M,\; z\mapsto g\beta(z).\]
	Then we obtain $g.\phi=\tilde{\beta}\circ\tilde{\alpha}$.
	If $\tilde{\alpha}$ is an isomorphism with inverse $\gamma:Z^g\rightarrow X$, then
	the $\cN$-linear map
	\[\tilde{\gamma}:Z\rightarrow X,\;z\rightarrow g^{-1}\gamma(z)\]
	is an inverse of $\alpha$, a contradiction.
	In the same way, $\tilde{\beta}$ is not an isomorphism.
	Hence $g.\phi=\tilde{\beta}\circ\tilde{\alpha}\in\rad^2_\cN(X,M)$.
\end{proof}

\begin{prop}\label{extend ar sequence}
	Let $\G$ be a finite subgroup scheme of a reduced group scheme $\cH$ and $\cN\subseteq\G$ a normal subgroup scheme
	of $\cH$ such that $\G/\cN$ is linearly
	reductive. Let $X$ and $M$ be indecomposable $\G$-modules such that there is an almost split exact sequence
	\[\mathcal{E}:\ses{\tau_\cN(M)}{X^n}{M}{}{}\]
	of $\cN$-modules.
	Then there is a short exact sequence 
	\[\tilde{\mathcal{E}}:\ses{N}{X\otimes_k \irr_\cN(X,M)}{M}{}{}\]
	of $\G$-modules such that $\res_\cN^\G\tilde{\mathcal{E}}=\mathcal{E}$.
\end{prop}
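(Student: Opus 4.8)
The statement we must prove is Proposition \ref{extend ar sequence}: given the almost split sequence $\mathcal{E}$ of $\cN$-modules ending in an indecomposable $\G$-module $M$ with $\tau_\cN(M)$ again (the restriction of) a $\G$-module, we want to promote it to a short exact sequence $\tilde{\mathcal{E}}$ of $\G$-modules of the indicated shape whose $\cN$-restriction recovers $\mathcal{E}$. The key structural input is that $X$ is a $\G$-module, so $\Hom_\cN(X,M)$ carries a natural $\G$-module structure (conjugation action), and by the preceding Lemma its subspaces $\rad_\cN(X,M)$ and $\rad_\cN^2(X,M)$ are $\G$-submodules; hence $\irr_\cN(X,M)$ is a $\G$-module. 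Note that since $\G/\cN$ is linearly reductive and $\cN$ acts trivially on $\irr_\cN(X,M)$ (maps differing by an $\cN$-automorphism are identified, and in fact $\cN$ acts trivially on the quotient because on $\Hom_\cN(X,M)$ the $\cN$-action is by $\cN$-automorphisms of $X$ and $M$, which are either isomorphisms acting trivially mod radical or... ) — more carefully, $\irr_\cN(X,M)$ is a module for $\G/\cN$, which is linearly reductive, so it is semisimple as a $\G/\cN$-module; this is what makes $X\otimes_k\irr_\cN(X,M)$ behave well.

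First I would make the identification $X^n \cong X\otimes_k\irr_\cN(X,M)$ at the level of $\cN$-modules: the multiplicity $n$ of $X$ as a summand of the middle term of an almost split sequence ending in $M$ equals $\dim_k\irr_\cN(X,M)$ (the number of arrows $X\to M$ in the AR-quiver of $\cN$), and since $\cN$ acts trivially on $\irr_\cN(X,M)$ one has $X\otimes_k\irr_\cN(X,M)\cong X^n$ as $\cN$-modules. So the middle term of $\tilde{\mathcal{E}}$ is forced, and the only issue is to exhibit a $\G$-module structure on it together with $\G$-linear maps making the sequence exact and restricting to $\mathcal{E}$. The natural candidate for the surjection is the evaluation-type map $X\otimes_k\irr_\cN(X,M)\to M$: lift $\irr_\cN(X,M)$ through the surjection $\rad_\cN(X,M)\twoheadrightarrow\irr_\cN(X,M)$ (possible after tensoring since we only need a $k$-linear section, or better, use that the composite $\Hom_\cN(X,M)\otimes_k X\to M$, $\phi\otimes x\mapsto\phi(x)$ is $\G$-linear and factors appropriately). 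The cleanest route: the evaluation map $\mathrm{ev}:\Hom_\cN(X,M)\otimes_k X\to M$ is $\G$-linear; restrict it to $\rad_\cN(X,M)\otimes_k X$; I claim it kills $\rad_\cN^2(X,M)\otimes_k X$ modulo... no — rather, compose with a $\G$-linear splitting. Since $\rad_\cN^2(X,M)\subseteq\rad_\cN(X,M)$ is an inclusion of $\G$-modules and $\G/\cN$ is linearly reductive while $\cN$ acts... one needs a $\G$-equivariant section $\irr_\cN(X,M)\to\rad_\cN(X,M)$; this exists because as $\G/\cN$-modules the sequence $0\to\rad_\cN^2\to\rad_\cN\to\irr_\cN\to0$ splits by linear reductivity of $\G/\cN$ (all these are $\G/\cN$-modules since $\cN$ acts trivially on the radical-layers — here again one uses that $\cN$-endomorphisms act as scalars mod radical on indecomposables, giving triviality of the $\cN$-action on $\irr_\cN$, and similarly one checks $\cN$ acts unipotently/trivially appropriately, or simply restricts attention to the $\G/\cN$-socle). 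Using this section $s$, define $\tilde\psi:X\otimes_k\irr_\cN(X,M)\xrightarrow{\ \id\otimes s\ } X\otimes_k\rad_\cN(X,M)\xrightarrow{\ \mathrm{ev}\ } M$, a $\G$-linear map. Its $\cN$-restriction is, by the theory of AR-sequences, a minimal right almost split map $X^n\to M$, hence surjective (as $M$ is not projective — which holds because $\tau_\cN M$ exists) and with kernel $\cong\tau_\cN(M)$ as an $\cN$-module.

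Then I would set $N:=\ker\tilde\psi$ with its induced $\G$-module structure (a $\G$-submodule of the $\G$-module $X\otimes_k\irr_\cN(X,M)$), giving the short exact sequence $\tilde{\mathcal{E}}:0\to N\to X\otimes_k\irr_\cN(X,M)\xrightarrow{\tilde\psi} M\to0$ of $\G$-modules. By construction $\res_\cN^\G\tilde\psi$ agrees with the given minimal right almost split map $X^n\to M$ up to the chosen identifications, so $\res_\cN^\G\tilde{\mathcal{E}}\cong\mathcal{E}$ as sequences of $\cN$-modules; in particular $\res_\cN^\G N\cong\tau_\cN(M)$. The one point requiring care in matching with $\mathcal{E}$ exactly (not just up to isomorphism) is the choice of identification $X\otimes_k\irr_\cN(X,M)\cong X^n$ and of basis; since the proposition only asks for \emph{a} sequence $\tilde{\mathcal{E}}$ with $\res_\cN^\G\tilde{\mathcal{E}}=\mathcal{E}$, one fixes the identification so that $\tilde\psi$ restricts to the given $\psi$, which is legitimate because any minimal right almost split map $X^n\to M$ differs from the restriction of our $\tilde\psi$ by an $\cN$-automorphism of $X^n$, and such an automorphism can be absorbed.

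\textbf{Main obstacle.} The delicate step is the very first one: verifying that $\cN$ acts trivially on $\irr_\cN(X,M)$ (so that $X\otimes_k\irr_\cN(X,M)$ restricts to $X^n$ and so that $\irr_\cN(X,M)$ is genuinely a $\G/\cN$-module to which linear reductivity applies), and consequently that the section $s$ exists $\G$-equivariantly. This is where reducedness of $\cH$ (used in the preceding Lemma to reduce to $\G(k)$-actions) and linear reductivity of $\G/\cN$ must be combined: the former gives that the relevant subspaces are honest $\G$-submodules, and the latter gives the splitting $s$. Everything after that — surjectivity of $\tilde\psi$, identification of the kernel, naturality/compatibility of the restriction — is formal from the functorial description of almost split sequences recalled in Section 1 together with Lemma \ref{inducing a G-module}.
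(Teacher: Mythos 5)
Your proposal is correct and takes essentially the same route as the paper: a $\G$-linear evaluation map $X\otimes_k\hom_\cN(X,M)\to M$, the splitting of $0\to\rad^2_\cN(X,M)\to\rad_\cN(X,M)\to\irr_\cN(X,M)\to 0$ coming from linear reductivity of $\G/\cN$ (with the preceding lemma guaranteeing these are $\G$-submodules), and the identification of the restricted map with the almost split surjection because the maps $f_1,\dots,f_n$ form a $k$-basis of $\irr_\cN(X,M)$. The point you flag as the main obstacle is in fact harmless: $\hom_\cN(X,M)$ is precisely the space of $\cN$-invariants of $\Hom_k(X,M)$ under the adjoint action, so $\cN$ acts trivially on it and hence on the subquotient $\irr_\cN(X,M)$, which is therefore automatically a $\G/\cN$-module.
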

\begin{proof}
	Consider the map
	\[\tilde{\psi}:X\otimes_k\hom_\cN(X,M)\rightarrow M,\; x\otimes f\mapsto f(x)\]
	of $\G$-modules. As $\G/\cN$ is linearly reductive, the short exact sequence
	\[\ses{\rad^2_\cN(X,M)}{\rad_\cN(X,M)}{\irr_\cN(X,M)}{}{}\]
	of $\G$-modules splits. This yields a decomposition
	\[\rad_\cN(X,M)\cong\rad^2_\cN(X,M)\oplus\irr_\cN(X,M).\]
	Hence there results a $\G$-linear map $\psi:X\otimes_k\irr_\cN(X,M)\rightarrow M$
	by restricting $\tilde{\psi}$ to $X\otimes_k\irr_\cN(X,M)$.
	Let $(f_i)_{1\leq i\leq n}:X^n\rightarrow M$ be the surjection given by $\mathcal{E}$.
	Since $\mathcal{E}$ is almost split, the maps $(f_i)_{1\leq i\leq n}$ form a $k$-basis of
	$\irr_\cN(X,M)$ (c.f. \cite[IV.4.2]{assem2006elements}). Therefore the restriction $\res_\cN^\G(\psi)$ equals $(f_i)_{1\leq i \leq n}$.
\end{proof}
In \ref{ind is direct sum of AR-seq} we have seen that if we apply the induction functor
to certain almost split sequences, they will be the direct sum of almost split sequences.
Our next result enables us to describe this decomposition under a certain regularity condition.\\

Let $\cM$ be a multiplicative group scheme, i.e. the coordinate ring of $\cM$ is isomorphic
to the group algebra $kX(\cM)$ of its character group. 
Then we obtain for any $\cM$-module $M$ a weight space decomposition
$M=\bigoplus_{\lambda\in X(\cM)}M_\lambda$ with
\[M_\lambda:=\{m\in M\:\vert\:hm=\lambda(h)m\;\text{for all}\;h\in k\cM\}.\]
Let $\G$ be a finite group scheme and
$\cN\subseteq\G^0$ a normal subgroup scheme of $\G$ such that $\G^0/\cN$ is linearly reductive.
By Nagata's Theorem \cite[IV,\S 3,3.6]{demazure1970groupes}, an infinitesimal group scheme is linearly reductive if
and only if it is multiplicative. \\
Let $M$ be a $\G^0$-module. For any $\lambda\in X(\G^0/\cN)$ we obtain a $\G^0$-module
$M\otimes_kk_\lambda$, the tensor product of $M$ with the one-dimensional $\G^0/\cN$-module defined
by $\lambda$. This defines an action of $X(\G^0/\cN)$ (here we identify $X(\G^0/\cN)$ with a subgroup of 
$X(\G^0)$ via the canonical inclusion $X(\G^0/\cN)\hookrightarrow X(\G^0)$ which is induced by the canonical
projection $\G^0\twoheadrightarrow \G^0/\cN$) on the isomorphism classes of $\G^0$-modules
and we define the stabilizer of a $\G^0$-module as
\[X(\G^0/\cN)_M:=\{\lambda\in X(\G^0/\cN)\:\vert\:M\otimes_kk_\lambda\cong M\}.\] A $\G^0$-module
$M$ is called $\G^0/\cN$-regular if its stabilizer is trivial.

\begin{prop}\label{decompose induced ar seq}
In the situation of \ref{extend ar sequence}  assume that $M$ and $N$ are $\G^0/\cN$-regular.
Let $S$ be a simple $\G/\cN$-module. Then the short exact sequence $\tilde{\mathcal{E}}\otimes_kS$ is almost split.
\end{prop}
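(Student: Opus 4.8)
The plan is to prove that $\tilde{\mathcal E}\otimes_kS$ is almost split by exhibiting it, inside the Krull--Schmidt category $\cS$ of short exact sequences of $\G$-modules, as a non-split direct summand of a direct sum of almost split sequences, and then pinning it down through the indecomposability of its two end terms. Since $\mathcal E$ is almost split, $M$ is indecomposable and non-projective over $\cN$ and $\res^\G_\cN N\cong\tau_\cN(M)$, so $\mathcal E$ satisfies the hypotheses of Proposition \ref{ind is direct sum of AR-seq}; hence $\ind^\G_\cN\mathcal E$ is a direct sum of almost split sequences of $\G$-modules. On the other hand, applying Lemma \ref{inducing a G-module} to each of the three terms of $\tilde{\mathcal E}$ and using its naturality gives an isomorphism of short exact sequences $\ind^\G_\cN\mathcal E=\ind^\G_\cN\res^\G_\cN\tilde{\mathcal E}\cong\tilde{\mathcal E}\otimes_kk(\G/\cN)$, and since $\G/\cN$ is linearly reductive one has $k(\G/\cN)\cong\bigoplus_{S'}(S')^{\dim_kS'}$, the sum ranging over the simple $\G/\cN$-modules; therefore
\[
\ind^\G_\cN\mathcal E\;\cong\;\bigoplus_{S'}\bigl(\tilde{\mathcal E}\otimes_kS'\bigr)^{\dim_kS'} .
\]
In particular $\tilde{\mathcal E}\otimes_kS$ is a direct summand of a direct sum of almost split sequences, and since almost split sequences are indecomposable objects of $\cS$, the Krull--Schmidt property of $\cS$ forces $\tilde{\mathcal E}\otimes_kS$ to be a direct sum of finitely many of these almost split sequences, with no split summand occurring.

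Next I would verify that $\tilde{\mathcal E}\otimes_kS$ is non-split: restricting to $\cN$ the module $S$ becomes the trivial module $k^{\dim_kS}$, so $\res^\G_\cN(\tilde{\mathcal E}\otimes_kS)\cong\mathcal E^{\dim_kS}$, which does not split because $\mathcal E$ does not. Writing $\tilde{\mathcal E}\otimes_kS\cong\bigoplus_{i=1}^r\mathcal A_i$ with each $\mathcal A_i\colon 0\to\tau_\G(C_i)\to B_i\to C_i\to 0$ almost split and $C_i$ indecomposable, non-splitness gives $r\geq1$, and comparing right-hand terms gives $M\otimes_kS\cong\bigoplus_{i=1}^rC_i$. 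Consequently, once we know that $M\otimes_kS$ is indecomposable we get $r=1$, so $\tilde{\mathcal E}\otimes_kS=\mathcal A_1$ is almost split, which is the assertion; indecomposability of $N\otimes_kS$ then merely re-identifies its left-hand term as $\tau_\G(M\otimes_kS)$.

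The remaining point---that $M\otimes_kS$ and $N\otimes_kS$ are indecomposable---is the only place where the $\G^0/\cN$-regularity hypothesis is used, and I expect it to be the technical heart of the proof. Here I would argue Clifford-theoretically. As $M|_\cN$ is indecomposable and $\cN\subseteq\G^0$, also $M|_{\G^0}$ is indecomposable; by Nagata's theorem the linearly reductive infinitesimal group scheme $\G^0/\cN$ is multiplicative, with finite character group $X(\G^0/\cN)$, and $S|_{\G^0}$ is the sum of its weight spaces, whose weights form a single orbit under $P:=\G/\G^0$ by Clifford theory applied to the diagonalizable normal subgroup scheme $\G^0/\cN\trianglelefteq\G/\cN$. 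Hence $(M\otimes_kS)|_{\G^0}\cong\bigoplus_\mu(M|_{\G^0}\otimes_kk_\mu)^m$, summed over the weights $\mu$ of $S|_{\G^0}$, and the $\G^0/\cN$-regularity of $M$ makes the indecomposable $\G^0$-modules $M|_{\G^0}\otimes_kk_\mu$ pairwise non-isomorphic. Since $\G$ permutes these pairwise non-isomorphic pieces transitively, any $\G$-module decomposition of $M\otimes_kS$ would split their multiplicities $P$-equivariantly, hence constantly along the single orbit; combined with the faithfulness of restriction to $\G^0$, and with the fact that the inertia subgroup scheme of a weight acts on the corresponding $m$-fold block through a simple projective representation---which is exactly where simplicity of $S$ enters---this forces $\End_\G(M\otimes_kS)$ to be local, so $M\otimes_kS$ is indecomposable. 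The same argument, with $N$ in place of $M$, settles $N\otimes_kS$.
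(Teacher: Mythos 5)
Your main argument is the same as the paper's: induce $\mathcal{E}$ and apply Proposition \ref{ind is direct sum of AR-seq}, identify $\ind_\cN^\G\mathcal{E}$ with $\bigoplus_i\tilde{\mathcal{E}}\otimes_kS_i$ via the naturality of $\psi$ from Lemma \ref{inducing a G-module}, observe non-splitness of $\tilde{\mathcal{E}}\otimes_kS$ by restricting to $\cN$, and conclude with the Krull--Schmidt property of the category of short exact sequences once the end terms are known to be indecomposable. The only genuine divergence is how you handle that last point: the paper simply cites \cite[5.1]{kirchhoff2015domestic} for the indecomposability of $M\otimes_kS$ and $N\otimes_kS$ (this is precisely where the $\G^0/\cN$-regularity hypothesis enters), whereas you sketch a Clifford-theoretic proof. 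Your sketch assembles the right ingredients (weight-space decomposition of $S$ over the multiplicative group $\G^0/\cN$, a single orbit of weights, regularity making the twists $M\vert_{\G^0}\otimes_kk_\mu$ pairwise non-isomorphic, simplicity of $S$ entering through the inertia group), but the decisive step is asserted rather than proved: a $\G$-direct summand of $M\otimes_kS$ need not respect the $\G^0$-isotypic pieces, since $\Hom_{\G^0}(M\otimes_kk_\mu,M\otimes_kk_\nu)$ is in general nonzero for $\mu\neq\nu$, so ``splitting the multiplicities equivariantly'' is not immediate. The standard way to close this is to compute $\End_\G(M\otimes_kS)=\End_{\G^0}(M\otimes_kS)^{\G(k)}$, pass modulo the radical (which is $\G(k)$-stable, and taking invariants is exact because linear reductivity of $\G/\cN$ forces $p\nmid\vert\G(k)\vert$), and apply Schur's lemma to the simple twisted representation of the inertia group on a weight space, giving a local endomorphism ring. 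So your route is essentially a re-proof of the cited result and is recoverable, but as written the indecomposability step is a sketch; invoking \cite[5.1]{kirchhoff2015domestic}, as the paper does, settles it directly.
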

\begin{proof}
Let $k(\G/\cN)=\bigoplus_{i=1}^nS_i$ be the decomposition into simple $\G/\cN$-modules.
		For any $\G$-module $U$, let $\psi_U:\ind_\cN^\G U\rightarrow U\otimes_kk(\G/\cN)$ be the natural isomorphism
		given by \ref{inducing a G-module}.
		We obtain the following commutative diagram with exact rows:
		\begin{center}
			\begin{tikzpicture}
			\matrix (m) [matrix of math nodes, row sep=3em, column sep=1em]
			{
				0 & \ind_\cN^\G N& \ind_\cN^\G (X\otimes_k\irr_\cN(X,M)) & \ind_\cN^\G M & 0 \\
				 0 & N\otimes_kk(\G/\cN)  &  X\otimes_k\irr_\cN(X,M)\otimes_kk(\G/\cN)  &  M\otimes_kk(\G/\cN) & 0 \\ };
			\path[-stealth]
			(m-1-1) edge (m-1-2)
			(m-1-2) edge (m-1-3)
			(m-1-3) edge (m-1-4)
			(m-1-4) edge (m-1-5)
			(m-2-1) edge (m-2-2)
			(m-2-2) edge (m-2-3)
			(m-2-3) edge (m-2-4)
			(m-2-4) edge (m-2-5)
			(m-1-2) edge node [right] {$\psi_N$} (m-2-2)
			(m-1-3) edge node [right] {$\psi_{X\otimes_k\irr_\cN(X,M)}$}(m-2-3)
			(m-1-4) edge node [right] {$\psi_M$}(m-2-4);
			
			\end{tikzpicture}
		\end{center}
		As all vertical arrows are isomorphisms, the two exact sequences $\ind_\cN^\G\mathcal{E}$ and
		$\bigoplus_{i=1}^n\tilde{\mathcal{E}}\otimes_kS_i$ are equivalent.
\\
		Thanks to \ref{ind is direct sum of AR-seq} the exact sequence $\ind_\cN^\G\mathcal{E}$
		is a direct sum of almost split sequences. 
		By \cite[5.1]{kirchhoff2015domestic}, the $\G$-modules
	 $M\otimes_kS$ and $N\otimes_kS$ are indecomposable. Moreover, the sequence $\tilde{\mathcal{E}}\otimes_kS$
	 does not split. Otherwise, the sequence $\res_\cN^\G(\tilde{\mathcal{E}}\otimes_kS)$ would split and therefore 
	also $\mathcal{E}$.
		Hence, the sequence $\tilde{\mathcal{E}}\otimes_kS$ is equivalent to an indecomposable direct summand of
		 $\ind_\cN^\G\mathcal{E}$. As the Krull-Schmidt theorem holds in the category of short exact sequences of 
		 finite-dimensional $\G$-modules (as the space of morphisms between those sequences is finite-dimensional, c.f. \cite{krause2014krull}),
		 it follows that $\tilde{\mathcal{E}}\otimes_kS_i$ is almost split.
\end{proof}
\section{The McKay and Auslander-Reiten quiver of finite domestic group schemes}
Let $k$ be an algebraically closed field of characteristic $p>2$. The group algebra $kSL(2)_1$ is isomorphic to the restricted universal 
enveloping algebra $\Usl2$ of the restricted Lie algebra $\sl2$. There are one-to-one correspondences between the representations of $SL(2)_1$, $\Usl2$ and
$\sl2$. The indecomposable representations of the restricted Lie algebra $\sl2$ were classified by Premet in \cite{premet1991green}.
For $d\in\N_0$ we consider the $(d+1)$-dimensional Weyl module $V(d)$ of highest weight $d$. Weyl modules are rational $SL(2)$-modules which are obtained
by twisting the $2$-dimensional standard module with the Cartan involution ($x\mapsto -x^{tr}$) and taking its $d$-th symmetric power.
For $d\leq p-1$ we obtain in this way exactly the simple $\Usl2$-modules $L(0),\dots,L(p-1)$.\\

Let $A$ be a self-injective $k$-algebra. We denote by $\Gamma_s(A)$ the stable Auslander-Reiten quiver of $A$.
The vertices of this valued quiver are the isomorphism classes of non-projective indecomposable $A$-modules and the arrows correspond
to the irreducible morphisms between these modules. Moreover, we have an automorphism $\tau_A$ of $\Gamma_s(A)$, called the Auslander-Reiten translation.
For a self-injective algebra the Auslander-Reiten translation is the composite $\nu\circ\Omega^2$, where $\nu$ denotes the Nakayama functor of $\mod A$ and
$\Omega$ the Heller shift of $\mod A$. For further details we refer to \cite{auslander1997representation} and \cite{assem2006elements}.\\
Let $Q$ be a quiver. We denote by $\Z[Q]$ the translation quiver with underlying set $\Z\times Q$, arrows
$(n,x)\rightarrow(n,y)$ and $(n+1,y)\rightarrow(n,x)$ for any arrow $x\rightarrow y$ in $Q$ and translation $\tau:\Z[Q]\rightarrow \Z[Q]$
given by $\tau(n,x)=(n+1,x)$.\\
Let $\Theta\subseteq\Gamma_s(A)$ be a connected component of the stable Auslander-Reiten 
quiver of $A$. Thanks to the Struktursatz of Riedtmann \cite{riedtmann1980algebren}, there
is an isomorphism of stable translation quivers $\Theta\cong\Z[T_\Theta]/\Pi$ where $T_\Theta$ denotes a directed tree and $\Pi$ is an admissible 
subgroup of $\Aut(\Z[T_\Theta])$. The underlying undirected tree $\overline{T}_\Theta$ is called the tree class of $\Theta$.

The Auslander-Reiten quiver of each block of $kSL(2)_1$ consists of two components of
type $\Z[\tilde{A}_{1,1}]$ and infinitely many homogeneous tubes $\Z[A_\infty]/(\tau)$. Thanks to \cite[4.1]{farnsteiner2009group},
each of the $p-1$ Euclidean
components $\Theta(i)$ contains exactly one simple $SL(2)_1$-module $L(i)$ with $0\leq i\leq p-2$. This component is then given by
\[\Theta(i)=\{\Omega^{2n}(L(i)),\Omega^{2n+1}(L(p-2-i))\;\vert\;n\in\Z\}\]
with almost split sequences
\[\ses{\Omega^{2n+2}(L(i))}{\Omega^{2n+1}(L(p-2-i))\oplus\Omega^{2n+1}(L(p-2-i))}{\Omega^{2n}(L(i))}{}{}.\]
In the following it will be convenient to have a common notation for the Weyl modules and their duals. We set
\[V(n,i):=
\begin{cases}
V(np+i) & \text{if }n\geq 0 \\
V(-np+i)^* & \text{if }n\leq 0 \\
\end{cases}\]
for $n\in\Z$ and $0\leq i\leq p-1$.\\
\begin{lem}\label{heller of weyl}
	Let $\G$ be a finite subgroup scheme of $SL(2)$ with $SL(2)_1\subseteq\G$ such that
	$\G/SL(2)_1$ is linearly reductive.
	Let $0\leq i\leq p-2$ and $n\in\Z$. Then
	\[\Omega^{2n}_\G(L(i))\cong V(2n,i)\]
	and
	\[
	\Omega^{2n+1}_\G(L(i))\cong V(2n+1,p-2-i)
	\]
	In particular, $\Theta(i)=\{V(n,i)\;\vert\; n\in\Z\}$.
\end{lem}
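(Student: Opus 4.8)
The plan is to produce an explicit minimal projective resolution of $L(i)$ over $\G$ whose $n$-th syzygy is $V(n,\cdot)$ as claimed, by restricting along $\G\hookrightarrow SL(2)$ a resolution of $L(i)$ that already lives in the category of rational $SL(2)$-modules. Write $j_n:=i$ for $n$ even and $j_n:=p-2-i$ for $n$ odd, so the assertion is $\Omega^n_\G(L(i))\cong V(n,j_n)$ for all $n\in\Z$. Since $k\G$ is self-injective, $L(i)$ is self-dual and $V(np+j)^{*}\cong V(-n,j)$, the case $n<0$ follows from the case $n>0$ by applying the $k$-linear duality; so I would treat only $n\ge 0$.

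First I would record two facts about the inclusion $SL(2)_1\subseteq\G$, both exploiting that $SL(2)_1$ is normal in $\G$ with $\G/SL(2)_1$ linearly reductive. (1) The ideal $\rad(kSL(2)_1)\cdot k\G$ is a nilpotent two-sided ideal of $k\G$ with semisimple quotient, by a Maschke-type argument, hence it equals $\rad(k\G)$; consequently $\res^\G_{SL(2)_1}$ sends projective covers to projective covers and commutes with $\Omega$, and a surjection from a projective $\G$-module is a projective cover as soon as its restriction to $SL(2)_1$ is one. (2) Since $k\G$ is free over $kSL(2)_1$ (the free Frobenius extension already used in the proof of \ref{ind is direct sum of AR-seq}), and since by \ref{inducing a G-module} and linear reductivity every $\G$-module is a direct summand of $\ind^\G_{SL(2)_1}\res^\G_{SL(2)_1}$ of itself, a $\G$-module is projective if and only if its restriction to $SL(2)_1$ is projective.

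The substantive input is the following assertion about $SL(2)$-modules: for all $n\ge 0$ and $0\le j\le p-2$ there is a short exact sequence of rational $SL(2)$-modules
\[0\to V((n+1)p+(p-2-j))\to T_{n,j}\to V(np+j)\to 0\]
in which the restriction of $T_{n,j}$ to $SL(2)_1$ is projective and the surjection restricts to a projective cover over $SL(2)_1$. For $n=0$ this is exactly the Weyl filtration $0\to V(2p-2-j)\to T(2p-2-j)\to V(j)\to 0$ of the indecomposable tilting module of highest weight $2p-2-j=2(p-1)-j$, together with the standard identification $T(2(p-1)-j)|_{SL(2)_1}\cong P_{SL(2)_1}(L(j))$. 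For $n\ge 1$ I would argue by induction on $n$: the structure theory of the $SL(2)$-Weyl modules (which on $SL(2)_1$ is explicit, cf. \cite{premet1991green}) shows that $V(np+j)|_{SL(2)_1}$ has semisimple head $L(j)^{\oplus(n+1)}$, so its $SL(2)_1$-projective cover is $P_{SL(2)_1}(L(j))^{\oplus(n+1)}$; one realizes the latter as the restriction of the $SL(2)$-module $T(2(p-1)-j)\otimes L(n)^{[1]}$, lifts the projective-cover map to $SL(2)$, and identifies its kernel with $V((n+1)p+(p-2-j))$ via composition factors and the fact that a Weyl module has simple $SL(2)_1$-socle. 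I expect the main obstacle to be exactly this: establishing the sequence honestly at the level of $SL(2)$-modules — in particular lifting the projective cover to $SL(2)$, equivalently checking that these tilting modules restrict to the $\G$-projective covers of the Weyl modules — rather than merely over $SL(2)_1$; here I would compute the relevant $\Hom$-space using that $T(2(p-1)-j)$ is tilting and invoke linkage/translation arguments for $SL(2)$, or quote \cite{premet1991green} directly.

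Granting these sequences the rest is formal. Restrict the sequence with $j=j_n$ along $\G\hookrightarrow SL(2)$: by (2) the module $T_{n,j_n}|_\G$ is projective, and by (1) the map $T_{n,j_n}|_\G\to V(np+j_n)|_\G$ is a projective cover with kernel $V((n+1)p+(p-2-j_n))|_\G = V(n+1,j_{n+1})|_\G$. Splicing for $n=0,1,2,\dots$ gives a minimal projective resolution $\cdots\to T_{1,j_1}|_\G\to T_{0,j_0}|_\G\to L(i)\to 0$ over $\G$, so $\Omega^n_\G(L(i))\cong V(n,j_n)|_\G$ for $n\ge 0$; together with the duality of the first paragraph this is precisely $\Omega^{2n}_\G(L(i))\cong V(2n,i)$ and $\Omega^{2n+1}_\G(L(i))\cong V(2n+1,p-2-i)$ for all $n\in\Z$. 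For the final clause one applies these two formulas to both $L(i)$ and $L(p-2-i)$ — noting that $\Omega^{2n+1}_\G(L(p-2-i))\cong V(2n+1,i)$ — and uses that $\Theta(i)$, being Euclidean of type $\tilde{A}_{1,1}$, consists exactly of the modules $\Omega^{2n}_\G(L(i))$ and $\Omega^{2n+1}_\G(L(p-2-i))$, $n\in\Z$; so $\Theta(i)=\{V(n,i)\mid n\in\Z\}$.
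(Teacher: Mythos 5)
Your overall strategy is the same as the paper's: produce short exact sequences of rational $SL(2)$-modules with end terms the (dual) Weyl modules and middle term projective upon restriction to $SL(2)_1$, then restrict along $\G\subseteq SL(2)$, using that $\G/SL(2)_1$ linearly reductive makes projectivity over $SL(2)_1$ equivalent to projectivity over $\G$ and makes $\rad(k\G)=\rad(kSL(2)_1)k\G$, and finally handle $n<0$ by duality. The paper obtains exactly such sequences by citing \cite[7.1.2]{xanthopoulos1992question}, where the middle term is $P(i)\otimes_k(V(n)^*)^{[1]}$. Your reductions (your facts (1) and (2), the splicing, the duality step, and the final clause about $\Theta(i)$) are correct and run parallel to the paper's argument; the problem lies in your construction of the key sequences.

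Concretely, your claimed middle term is wrong for $n\geq p$: the restriction of $L(n)^{[1]}$ to $SL(2)_1$ is trivial of dimension $\dim_kL(n)$, and by the Steinberg tensor product theorem $\dim_kL(n)<n+1$ once $n\geq p$ (e.g.\ $L(p)^{[1]}=L(1)^{[2]}$ is $2$-dimensional). Hence $\bigl(T(2(p-1)-j)\otimes_kL(n)^{[1]}\bigr)\vert_{SL(2)_1}$ is $P_{SL(2)_1}(L(j))^{\oplus\dim_kL(n)}$, not $P_{SL(2)_1}(L(j))^{\oplus(n+1)}$, and a dimension count shows that no short exact sequence with this middle term and end terms $V((n+1)p+(p-2-j))$ and $V(np+j)$ can exist. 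The repair is to twist by the $(n+1)$-dimensional module $(V(n)^*)^{[1]}$ (or $V(n)^{[1]}$), i.e.\ precisely the middle term of the sequence the paper quotes. Moreover, the step you yourself flag as the main obstacle --- lifting the $SL(2)_1$-projective cover to a homomorphism of rational $SL(2)$-modules and identifying its kernel --- is not actually carried out in your proposal, and \cite{premet1991green} (which concerns restricted $\mathfrak{sl}(2)$-modules, not rational $SL(2)$-structures) does not supply it; this lifting is exactly the nontrivial content of \cite[7.1.2]{xanthopoulos1992question}. So as written the proposal has a genuine gap at its central input, although the surrounding formal machinery is sound.
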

\begin{proof}
	By \cite[7.1.2]{xanthopoulos1992question}, there is for $n\leq 0$ a short exact sequence
	\[\ses{V(n,i)}{P(i)\otimes_k (V(n)^*)^{[1]}}{V(n-1,p-2-i)}{}{}\]
	of $SL(2)$-modules, where $(V(n)^*)^{[1]}$ denotes the first Frobenius twist of $V(n)^*$ (c.f. \cite[I.9.10]{jantzen2007representations}).
	As $P(i)$ is a projective $SL(2)$-module, we obtain that the $\G$-module $P(i)\otimes_k (V(n)^*)^{[1]}$ is projective.
	Hence there is a projective $\G$-module $P$ with $\Omega_\G(V(n-1,p-2-i))\oplus P\cong V(n,i)$.
	Since $V(n,i)$ is a non-projective indecomposable $\G$-module, we obtain
	 $\Omega_\G(V(n-1,p-2-i))\cong V(n,i)$.\\
	Dualizing the above sequence yields $\Omega_\G(V(n,i)))=V(n+1,p-2-i)$ for $n\geq 0$ in the same way.
\end{proof}

\begin{lem}\label{irr for SL(2)}
	For all $n\in\Z$ and $0\leq i\leq p-2$ the $SL(2)$-modules $L(1)^{[1]}$ and $\irr_{SL(2)_1}(V(n+1,i),V(n,i))$ are
	isomorphic.
\end{lem}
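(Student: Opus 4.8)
The plan is to identify the almost split sequence over $SL(2)_1$ ending at $V(n,i)$, read off from it the dimension and the $SL(2)$-module structure of $\irr_{SL(2)_1}(V(n+1,i),V(n,i))$, and then determine that module by a parity-of-weights argument.

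First I would pin down the relevant almost split sequence. By \ref{heller of weyl} applied to $\G=SL(2)_1$ we have $\Omega^{2m}_{SL(2)_1}(L(i))=V(2m,i)$ and $\Omega^{2m+1}_{SL(2)_1}(L(p-2-i))=V(2m+1,i)$, so the almost split sequences of $SL(2)_1$-modules listed above, $0\to\Omega^{2n+2}(L(i))\to\Omega^{2n+1}(L(p-2-i))^{2}\to\Omega^{2n}(L(i))\to 0$, become $0\to V(2n+2,i)\to V(2n+1,i)^{2}\to V(2n,i)\to 0$. Together with the facts that the tree class of $\Theta(i)$ is $\tilde A_{1,1}$ and that $\tau_{SL(2)_1}$ shifts the index by $2$ (here $SL(2)_1$ is symmetric, so $\tau_{SL(2)_1}=\Omega_{SL(2)_1}^{2}$), this shows that for every $n\in\Z$ the almost split sequence of $SL(2)_1$-modules ending at $V(n,i)$ is
\[\mathcal{E}_n\colon\ses{V(n+2,i)}{V(n+1,i)^{\oplus 2}}{V(n,i)}{}{}.\]
By \cite[IV.4.2]{assem2006elements} the two components of its minimal right almost split map form a $k$-basis of $\irr_{SL(2)_1}(V(n+1,i),V(n,i))$, so $\dim_k\irr_{SL(2)_1}(V(n+1,i),V(n,i))=2$.

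Next I would equip $I_n:=\irr_{SL(2)_1}(V(n+1,i),V(n,i))$ with an $SL(2)$-module structure. Applying the lemma preceding \ref{extend ar sequence} with the reduced group $SL(2)$ and its normal subgroup scheme $SL(2)_1$, we see that $\rad_{SL(2)_1}$ and $\rad^2_{SL(2)_1}$ of $(V(n+1,i),V(n,i))$ are $SL(2)$-submodules of $\Hom_{SL(2)_1}(V(n+1,i),V(n,i))$, so $I_n$ is a $2$-dimensional rational $SL(2)$-module. Any element of $SL(2)_1$ fixes every $SL(2)_1$-linear map, so $SL(2)_1$ acts trivially on $\Hom_{SL(2)_1}(V(n+1,i),V(n,i))$, hence on $I_n$; thus $I_n\cong L^{[1]}$ for a $2$-dimensional rational $SL(2)$-module $L$. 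Up to isomorphism the only $2$-dimensional rational $SL(2)$-modules are $L(1)$ and $k\oplus k$ (a non-semisimple one would give a non-zero class in $\Ext^1_{SL(2)}(k,k)=0$), so it remains to exclude $L\cong k\oplus k$.

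Finally I would compare weights. Since $\Hom_{SL(2)_1}(V(n+1,i),V(n,i))=(V(n+1,i)^{*}\otimes_k V(n,i))^{SL(2)_1}$ is an $SL(2)$-submodule of $V(n+1,i)^{*}\otimes_k V(n,i)$, the module $I_n$ is an $SL(2)$-subquotient of $V(n+1,i)^{*}\otimes_k V(n,i)$, so every weight of $I_n$ with respect to a maximal torus $T\subseteq SL(2)$ occurs as a weight of $V(n+1,i)^{*}\otimes_k V(n,i)$. Now $V(m,i)$ is a Weyl module for $m\ge 0$ and the dual of one for $m\le 0$, so all of its weights are congruent to $mp+i$ modulo $2$; hence all weights of $V(n+1,i)^{*}\otimes_k V(n,i)$ are congruent to $(n+1)p+i+np+i\equiv p\equiv 1$ modulo $2$, and every weight of $I_n$ is odd. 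If $I_n\cong k\oplus k=(k\oplus k)^{[1]}$ were trivial, its only weight would be $0$, a contradiction. Therefore $L\cong L(1)$ and $I_n\cong L(1)^{[1]}$. The only step requiring genuine care is the passage from $\mathcal{E}_n$ (a sequence of $SL(2)_1$-modules) to the $SL(2)$-module $I_n$, for which the reducedness of $SL(2)$ is used via the lemma before \ref{extend ar sequence}; the remaining weight bookkeeping, including the cases $n<0$, is routine.
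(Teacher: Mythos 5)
Your first three steps are sound (and are in fact the ingredients the paper also uses implicitly): the almost split sequence ending in $V(n,i)$ has middle term $V(n+1,i)\oplus V(n+1,i)$, so $\dim_k\irr_{SL(2)_1}(V(n+1,i),V(n,i))=2$; the lemma preceding \ref{extend ar sequence} makes $\irr_{SL(2)_1}(V(n+1,i),V(n,i))$ a rational $SL(2)$-module; and since $SL(2)_1$ acts trivially on $\Hom_{SL(2)_1}(V(n+1,i),V(n,i))=\Hom_k(V(n+1,i),V(n,i))^{SL(2)_1}$, this module is a Frobenius twist $L^{[1]}$. The gap is in the final classification step. It is not true that the only $2$-dimensional rational $SL(2)$-modules are $L(1)$ and $k\oplus k$: by Steinberg's tensor product theorem $\dim_kL(d)=\prod_j(d_j+1)$ for the $p$-adic expansion $d=\sum_jd_jp^j$, so \emph{every} twist $L(1)^{[j]}\cong L(p^j)$, $j\geq 0$, is a $2$-dimensional simple rational $SL(2)$-module. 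Hence your argument only shows $\irr_{SL(2)_1}(V(n+1,i),V(n,i))\cong L(1)^{[j]}$ for some $j\geq 1$ or $k\oplus k$, and the parity argument cannot repair this: since $p$ is odd, the weights $\pm p^{j}$ of $L(1)^{[j]}$ are odd for every $j$, so oddness of the weights only excludes $k\oplus k$, not the higher twists. The weight bound coming from $V(n+1,i)^*\otimes_kV(n,i)$ does not help in general either: its weights go up to $(2n+1)p+2i$, which exceeds $p^2$ for large $n$, so for such $n$ the possibility $L(1)^{[2]}$, say, remains open. (For $n=0$, where the bound $p+2i<p^2$ together with parity and $p$-divisibility of the weights of a twist forces the weights to be $\pm p$, your argument does work; but it does not cover all $n$, and you supply no reduction, e.g.\ an $SL(2)$-equivariant identification of $\irr_{SL(2)_1}(V(n+1,i),V(n,i))$ with the case $n=0$ via Heller shifts.)

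This is precisely the point where the paper's proof uses a genuinely different input: applying $\Hom_{SL(2)_1}(-,V(n,i))$ to the $SL(2)$-exact sequence $0\to V(n+1,i)\to P\to V(n,p-2-i)\to 0$ with $P$ projective over $SL(2)_1$ gives an $SL(2)$-equivariant surjection from $\Ext^1_{SL(2)_1}(V(n,p-2-i),V(n,i))$ onto $\irr_{SL(2)_1}(V(n+1,i),V(n,i))$, and Erdmann's computation identifies that Ext-group with $L(1)^{[1]}$ as an $SL(2)$-module, for all $n$ at once. That identification is exactly what pins down which twist occurs, and your proposal has no substitute for it; without such an argument the proof is incomplete.
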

\begin{proof}
	As in the proof of \ref{heller of weyl} there is for all $n\in\Z$ a short exact sequence
	\[\ses{V(n+1,i)}{P}{V(n,p-2-i)}{}{}\]
	of $SL(2)$-modules with $P$ being projective over $SL(2)_1$. 
	Applying the functor $\hom_{SL(2)_1}(-,V(n,i))$ to this sequence yields the following exact sequence of $SL(2)$-modules:
	\begin{align*}
	&\hom_{SL(2)_1}(P,V(n,i))\overset{\alpha}{\rightarrow}
	\hom_{SL(2)_1}(V(n+1,i),V(n,i)) \\ \overset{\beta}{\rightarrow}
	&\Ext^1_{SL(2)_1}(V(n,p-2-i),V(n,i))\rightarrow
	\Ext^1_{SL(2)_1}(P,V(n,i))
	\end{align*}
	As $P$ is a projective $SL(2)_1$-module, we obtain that
	$\Ext_{SL(2)_1}^1(P,V(n,i))=(0)$. Hence the map $\beta$ is surjective.
	By \cite[2.4]{erdmann1995ext}, the $SL(2)$-modules $L(1)^{[1]}$ and $\Ext^1_{SL(2)_1}(V(n,p-2-i),V(n,i))$
	are isomorphic. As $\im\alpha=\ker\beta$ is contained in
	$\rad^2_{SL(2)_1}(V(n+1,i),V(n,i))$, we obtain a surjective morphism
	\[L(1)^{[1]}\rightarrow\irr_{SL(2)_1}(V(n+1,i),V(n,i)) \]
	of $SL(2)$-modules. Since both modules are $2$-dimensional, this map is an isomorphism.
\end{proof}

Let $\cH$ be a finite linearly reductive group scheme, $S_1,\dots,S_n$ a complete set of pairwise non-isomorphic
simple $\cH$-modules and $L$ be an $\cH$-module. For each $1\leq j\leq n$ there are $a_{ij}\geq 0$ such that
\[L\otimes_k S_j\cong \bigoplus_{i=1}^na_{ij}S_i.\]
The McKay quiver $\Upsilon_L(\cH)$ of $\cH$ relative to $L$ is the quiver with underlying set $\{S_1,\dots S_n\}$
and $a_{ij}$ arrows from $S_i$ to $S_j$.\\
Let $\G$ be a domestic finite group scheme. We denote by $\G_{lr}$ the largest linearly reductive normal
subgroup scheme of $\G$. Let $\cZ$ be the center of the group scheme $SL(2)$. Thanks to
\cite[4.3.2]{farnsteiner2012extensions}, there is a finite linearly reductive subgroup scheme $\tilde{\G}$
of $SL(2)$ with $\cZ\subseteq\tilde{\G}$ such that $\G/\G_{lr}$ is isomorphic to
$(SL(2)_1\tilde{\G})/\cZ$. Group schemes which arise in this fashion are also called amalgamated polyhedral group schemes. 
By the above, any domestic finite group scheme can be associated to one of these group schemes. 
If $\G$ is an amalgamated polyhedral group scheme we set $\hat{\G}:=SL(2)_1\tilde{\G}$.\\
For any Euclidean diagram $(\tilde{A}_n)_{n\in\N}, (\tilde{D}_n)_{n\geq 4}$ and $(\tilde{E}_n)_{6\leq n\leq 8}$ we will
denote in the same way the quiver where each edge $\bullet-\bullet$ is replaced by a pair of arrows $\bullet\leftrightarrows\bullet$.
As shown in the proof of \cite[7.2.3]{farnsteiner2006polyhedral}, the McKay quiver 
${\Upsilon}_{L(1)^{[1]}}(\hat{\G}/\hat{\G}_1)$ is isomorphic to one of the
quivers $\tilde{A}_{2np^{r-1}-1},\tilde{D}_{np^{r-1}+2}, \tilde{E}_6, \tilde{E}_7, \tilde{E}_8$,
where $r$ is the height of $\hat{\G}^0$ and $(n,p)=1$. \\
For any quiver $Q$ we denote by $Q_s$ its separated quiver. If $\{1,\dots n\}$ is the vertex set of $Q$, then $Q_s$ has $2n$ vertices
$\{1,\dots n,1',\dots n'\}$ and arrows $i\rightarrow j'$ if and only if $i\rightarrow j$ is an arrow in $Q$. 
The separated quiver of one of the quivers $\tilde{A}_{2np^{r-1}-1},\tilde{D}_{np^{r-1}+2}, \tilde{E}_6, \tilde{E}_7, \tilde{E}_8$
is the union of 2 quivers with the same underlying graph as the original quiver
and each vertex is either a source or a sink.
\begin{thm}
Let $\G$ be an amalgamated polyhedral group scheme and $\Theta$ a component of $\Gamma_s(\G)$ 
containing a $\G$-module of complexity $2$. Let $Q$ be a connected component of $\Upsilon_{L(1)^{[1]}}(\hat{\G}/\hat{\G}_1)_s$. 
Then $\Theta$ is isomorphic to $\Z[Q]$.
\end{thm}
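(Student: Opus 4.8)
The plan is to reduce to $\G=\hat\G:=SL(2)_1\tilde\G$ and then to realize $\Theta$ explicitly by Weyl modules twisted by simple $\hat\G/\hat\G_1$-modules. First I would record the structural facts. Since $SL(2)_1\subseteq\hat\G\subseteq SL(2)$, the first Frobenius kernel of $\hat\G$ is $\hat\G_1=\hat\G\cap SL(2)_1=SL(2)_1$; hence $\cH:=\hat\G/\hat\G_1$ is linearly reductive, $\hat\G^0/\hat\G_1$ is multiplicative, and $L(1)^{[1]}$ is a $\cH$-module. As $k\hat\G$ is a free $kSL(2)_1$-module (a Frobenius extension, $\cH$ being linearly reductive), the restriction map $\V_{SL(2)_1}\to\V_{\hat\G}$ is finite and complexity over $\hat\G$ agrees with complexity over $SL(2)_1$. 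Combining this with Clifford theory for the normal subgroup scheme $SL(2)_1$ — the group $SL(2)\supseteq\hat\G$ acts trivially on isomorphism classes of $SL(2)_1$-modules and each $L(i)$ already extends to $\hat\G$ — I would show that an indecomposable $\hat\G$-module $M$ has complexity $2$ precisely when $M\cong V(m,i)\otimes_kS$ for some $m\in\Z$, some $0\le i\le p-2$, and some simple $\cH$-module $S$ (the complexity-$2$ indecomposable $SL(2)_1$-modules are the $V(d)$ with $d\not\equiv p-1\pmod p$, i.e.\ the restrictions of the $V(m,i)$, and the different $\hat\G$-structures on a fixed restriction differ by a character of $\cH$, which can be absorbed into $S$). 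A short weight computation shows in addition that each such $V(m,i)$ is $\hat\G^0/\hat\G_1$-regular, a Weyl module $V(d)$ failing $\mu_{p^s}$-regularity only if $d\equiv-1\pmod{p^s}$, which would force $d\equiv p-1\pmod p$. The reduction of a general amalgamated polyhedral $\G$ to $\hat\G$ I would take from the behaviour of stable Auslander-Reiten quivers under linearly reductive normal subgroup schemes together with the Morita classification of the blocks recalled in the introduction; so from now on $\G=\hat\G$.

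Next I would manufacture the almost split sequences. Fix $0\le i\le p-2$ and $m\in\Z$. The description of $\Gamma_s(SL(2)_1)$ and Lemma \ref{heller of weyl} give an almost split sequence $0\to V(m+2,i)\to V(m+1,i)^2\to V(m,i)\to 0$ of $SL(2)_1$-modules. Proposition \ref{extend ar sequence}, with $\cN=SL(2)_1\trianglelefteq SL(2)$ and $\G=\hat\G$, together with Lemma \ref{irr for SL(2)} (identifying $\irr_{SL(2)_1}(V(m+1,i),V(m,i))$ with $L(1)^{[1]}$ as an $SL(2)$-module), yields a short exact sequence of $\hat\G$-modules
\[\tilde{\mathcal{E}}_m:\quad 0\to V(m+2,i)\to V(m+1,i)\otimes_kL(1)^{[1]}\to V(m,i)\to 0\]
restricting to it. By the regularity just noted, Proposition \ref{decompose induced ar seq} applies, so for every simple $\cH$-module $S$ the sequence $\tilde{\mathcal{E}}_m\otimes_kS$ is almost split over $\hat\G$. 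Writing $L(1)^{[1]}\otimes_kS\cong\bigoplus_{S'}(S')^{a_{S'S}}$ in $\mod\cH$ — $a_{S'S}$ being the number of arrows $S'\to S$ in $\Upsilon:=\Upsilon_{L(1)^{[1]}}(\cH)$ — it reads
\[0\to V(m+2,i)\otimes_kS\to\bigoplus_{S'}\big(V(m+1,i)\otimes_kS'\big)^{a_{S'S}}\to V(m,i)\otimes_kS\to 0 .\]

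Finally I would read off the component. These sequences show that the full subquiver $\Xi$ of $\Gamma_s(\hat\G)$ on $\{\,V(m,i)\otimes_kS\mid m\in\Z,\ S\text{ simple }\cH\text{-module}\,\}$ is closed under meshes: it has $a_{S'S}$ arrows $V(m+1,i)\otimes_kS'\to V(m,i)\otimes_kS$ whenever $S,S'$ are adjacent in the underlying graph $\Delta$ of $\Upsilon$, and $\tau(V(m,i)\otimes_kS)=V(m+2,i)\otimes_kS$. By \cite[7.2.3]{farnsteiner2006polyhedral} $\Delta$ is a connected Euclidean diagram, hence bipartite, $\Delta=\Delta_+\sqcup\Delta_-$; tracing arrows one sees that $\Xi$ splits into exactly two connected components, namely $\{V(m,i)\otimes_kS: m\text{ even},\,S\in\Delta_+\}\cup\{V(m,i)\otimes_kS: m\text{ odd},\,S\in\Delta_-\}$ and its shift by one. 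The separated quiver $\Upsilon_s$ also has two components, each a copy of $\Delta$ bipartitely oriented, so $\Z[Q]$ is the same for either one; fix such a $Q$. I would check that $(n,x)\mapsto V(2n,i)\otimes_kx$ for $x\in\Delta_+$ and $(n,x)\mapsto V(2n-1,i)\otimes_kx$ for $x\in\Delta_-$ defines an isomorphism of stable translation quivers from $\Z[Q]$ onto one component of $\Xi$: the level doubling exactly reconciles $\tau$ being the shift by $2$ on $\Xi$ with the shift by $1$ on $\Z[Q]$, and sends meshes to meshes. Since a component $\Theta$ containing a complexity-$2$ module contains some $V(m,i)\otimes_kS$ by the first step, $\Theta$ is one of these two components, whence $\Theta\cong\Z[Q]$. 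The main obstacle is this last combinatorial identification — especially spotting that $\Xi$ breaks into two components and getting the level-doubling isomorphism right — along with the Clifford-theoretic classification of the complexity-$2$ modules and the reduction to $\hat\G$.
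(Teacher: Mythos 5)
Your proposal is correct and follows essentially the same route as the paper: reduce to $\hat{\G}$ via the block results of \cite{farnsteiner2006polyhedral}, write the complexity-$2$ indecomposables as $V(m,i)\otimes_k S$, produce the almost split sequences from Propositions \ref{extend ar sequence} and \ref{decompose induced ar seq} combined with Lemmas \ref{heller of weyl} and \ref{irr for SL(2)}, and identify the component with $\Z[Q]$ by the same level-doubling map. The only deviations are cosmetic: you sketch a Clifford-theoretic derivation of the classification that the paper simply cites from \cite[7.4]{kirchhoff2015domestic}, you explicitly check the $\hat{\G}^0/\hat{\G}_1$-regularity hypothesis, and you organize the final combinatorial step via the bipartite splitting of the mesh-closed subquiver rather than the paper's path-tracing argument, all of which amount to the same identification.
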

\begin{proof}
Thanks to \cite[7.3.2]{farnsteiner2006polyhedral} all the non-simple blocks of $k\G$ are Morita equivalent
to the principal block $\cB_0(\G)$ of $k\G$. Additionally, by \cite[1.1]{farnsteiner2006polyhedral}, 
the block $\cB_0(\G)$ is isomorphic to the block $\cB_0(\hat{\G})$. Therefore it suffices to prove this result for $\G:=\hat{\G}$.\\
In view of \cite[5.6]{friedlander2005representation} and \cite[3.1]{farnsteiner2007support}, all modules belonging
to the component $\Theta$ have complexity $2$. Let $S_1,\dots,S_m$ be the simple $\G/\G_1$-modules and 
$M\in\Theta$. Due to \cite[7.4]{kirchhoff2015domestic},
there are $0\leq l\leq p-2$, $n\geq 0$ and $1\leq j\leq m$ such that $M\cong V(n,l)\otimes_kS_j$. \\
 Thanks to \cite[7.4.1]{farnsteiner2006polyhedral}, the group algebra $k\G$ is symmetric.
 Therefore the Auslander-Reiten translation $\tau_\G$ equals $\Omega_\G^2$ (see \cite[4.12.8]{benson1998representationsI}).
 Applying \ref{extend ar sequence} and \ref{decompose induced ar seq} to the almost split exact sequence
 \[\ses{V(n+2,l)}{V(n+1,l)\oplus V(n+1,l)}{V(n,l)}{}{}\]
 of $SL(2)_1$-modules yields the almost split exact sequence
 \[\ses{\tau_\G(V(n,l))}{V(n+1,l)\otimes_k \irr_{SL(2)_1}(V(n+1,l),V(n,l))}{V(n,l)}{}{}\]
 of $\G$-modules. Due to \ref{heller of weyl}, we have 
 $\tau_\G(V(n,l))\cong\Omega^2_\G(V(n,l))\cong V(n+2,l)$.
 In conjunction with \ref{irr for SL(2)} and \ref{ind is direct sum of AR-seq} we now 
 obtain the almost split exact sequence
 \[\ses{ V(n+2,l)\otimes_kS_j}{V(n+1,l)\otimes_k L(1)^{[1]}\otimes_kS_j}{V(n,l)\otimes_kS_j}{}{}.\]
 Hence $\tau_\G(V(n,l)\otimes_kS_j)\cong V(n+2,l)\otimes_kS_j$.
 Moreover, due to the decomposition $L(1)^{[1]}\otimes_kS_j\cong\bigotimes_{i=1}^ma_{ij}S_i$, there
 are $a_{ij}$ arrows $V(n+1,l)\otimes_kS_i\rightarrow V(n,l)\otimes_kS_j$ and
 $a_{ij}$ arrows $V(n+2,l)\otimes_kS_j\rightarrow V(n+1,l)\otimes_kS_i$ in $\Theta$. 
 Without loss we can now assume that $n=0$, so that $V(0,l)\otimes_kS_j$ belongs to $\Theta$.\\
 Denote by $\{1,\dots,m\}$ the vertex set of $\Upsilon_{L(1)^{[1]}}({\G}/{\G}_1)$ and by
 $\{1,\dots,m,1',\dots,m'\}$ the vertex set of its separated quiver. Let $Q$ be the connected component
 of $\Upsilon_{L(1)^{[1]}}({\G}/{\G}_1)_s$ which contains $j'$. 
 If $N$ is another module in $\Theta$, then there are $\mu\in\Z$,$0\leq \tilde{l}\leq p-2$
 and $t\in\{1,\dots,m\}$ with $N\cong V(\mu,\tilde{l})\otimes_k S_t$. 
 By the above, $M$ and $N$ can only lie in the same component if $l=\tilde{l}$.
 If $\mu=2\nu$ is even, then $\tau_\G^{-\nu}(N)\cong V(0,l)\otimes_kS_t$.
 As $M$ and $N$ are in the same component, there is a path
 \[V(0,l)\otimes_lS_j\leftarrow V(1,l)\otimes_lS_{i_1}\rightarrow V(0,l)\otimes_lS_{i_2}
 \leftarrow\dots\leftarrow V(1,l)\otimes_lS_{i_r}\rightarrow V(0,l)\otimes_lS_{t}\]
 in $\Theta$. This gives rise to a path
 \[j'\leftarrow i_1\rightarrow i_2'\leftarrow\dots\leftarrow i_r\rightarrow t'\]
 in the separated quiver $\Upsilon_{L(1)^{[1]}}({\G}/{\G}_1)_s$.
 Consequently, $t'\in Q$. Similarly, if $\mu$ is odd, we obtain $t\in Q$.\\
 Moreover, for each arrow $i\rightarrow t'$ in $Q$ and $\mu\in\Z$ we have arrows
 \[\phi_{i,t',\mu}:V(\mu+1,l)\otimes_kS_i\rightarrow V(\mu,l)\otimes_kS_t\text{ and }\]
 \[\phi_{t',i,\mu+1}:V(\mu+2,l)\otimes_kS_t\rightarrow V(\mu+1,l)\otimes_kS_i\] in $\Theta$. \\
 Now let $\psi:\Z[Q]\rightarrow \Theta$ be the morphism of stable translation quivers given by
 \[\psi(\nu,t)=V(2\nu+1,l)\otimes_kS_t\text{ for each }\nu\in\Z\text{ and }t\in\{1,\dots,m\}\]
 \[\psi(\nu,t')=V(2\nu,l)\otimes_kS_t\text{ for each }\nu\in\Z\text{ and }t'\in\{1',\dots,m'\}\]
 \[\psi((\nu,i)\rightarrow(\nu,t'))=\phi_{i,t',2\nu}\]
 \[\psi((\nu+1,t')\rightarrow(\nu,i))=\phi_{t',i,2\nu+1}.\]
One now easily checks that this is an isomorphism.
\end{proof}

\section{Quotients of support varieties and ramification}
The goal of this section is to describe a geometric connection between the tubes in the Auslander-Reiten quiver of a finite group scheme $\G$
and the corresponding tubes in the Auslander-Reiten quiver of a normal subgroup scheme $\cN$ of $\G$. 
We will see that the support variety of $\cN$ is a geometric quotient of the support variety of $\G$.
The geometric connection will then be given via the ramification indices of the quotient morphism.\\

Let $k$ be an algebraically closed field. We say that $X$ is a variety, if it is a separated reduced prevariety over $k$ and
we will identify it with its associated separated reduced $k$-scheme of finite type 
(c.f. \cite{jantzen2007representations},\cite{goertz2010algebraic}). A point $x\in X$ is always supposed to be closed
and therefore also to be $k$-rational, as $k$ is algebraically closed. Let $x\in X$, $R$ be a commutative $k$-algebra and
$\iota_R:k\rightarrow R$ be the canonical inclusion. Then we denote by $x_R:=X(\iota_R)(x)$ the image of $x$ in $X(R)$.
An action of a group scheme on a variety is always supposed to be an action via morphisms of schemes.
\begin{Def}
Let $\cH$ be a group scheme acting on a variety $X$.
\begin{enumerate}
\item A pair $(Y,q)$ consisting of a variety $Y$ and an $\cH$-invariant morphism $q:X\rightarrow Y$ is called categorical quotient
of $X$ by the action of $\cH$, if for every $\cH$-invariant morphism $q':X\rightarrow Y'$ of varieties there is a unique morphism
$\alpha:Y\rightarrow Y'$ such that $q'=\alpha\circ q$.
\item A pair $(Y,q)$ consisting of a variety $Y$ and an $\cH$-invariant morphism $q:X\rightarrow Y$ of varieties is called geometric quotient
of $X$ by the action of $\cH$, if the underlying topological space of $Y$ is the quotient of the underlying topological space of $X$
by the action of the group $\cH(k)$ and $q:X\rightarrow Y$ is an $\cH$-invariant morphism of schemes such that
the induced homomorphism of sheafs  $\mathcal{O}_Y\rightarrow q_*(\mathcal{O}_X)^{\cH}$ is an isomorphism.
\item If $x\in X$ is a point, then the stabilizer $\cH_x$ is the subgroup scheme of $\cH$ given by
$\cH_x(R)=\{g\in\cH(R)\;\vert\; g.x_R=x_R\}$ for every commutative $k$-algebra $R$.
\end{enumerate}
\end{Def}
Thanks to \cite[12.1]{mumford1974abelian}, there is for any finite group scheme $\cH$ and any quasi-projective variety $X$ 
an up to isomorphism uniquely determined geometric quotient which will be denoted by $X/\cH$.
Moreover, the quotient morphism $q:X\rightarrow X/\cH$ is finite, i.e.
there exists an open affine covering $X/\cH=\bigcup_{i\in I} V_i$ such that $q^{-1}(V_i)$ is affine and the ring homomorphism
$k[V_i]\rightarrow k[q^{-1}(V_i)]$ is finite for all $i\in I$.
\begin{example}
Let $\cH$ be a finite group scheme and $A$ be a finitely generated commutative $k$-algebra. Then the set $X:=\maxspec A$ of maximal ideals of $A$
is an affine variety. An action of $\cH$ on $X$ gives $A$ the structure of a $k\cH$-module algebra over the Hopf algebra $k\cH$. This means that
$A$ is an $\cH$-module such that $h.(ab)=\sum_{(h)}(h_{(1)}.a)(h_{(2)}.b)$ and $h.1=\epsilon(h)1$ where $\epsilon:k\cH\rightarrow k$
is the counit of $k\cH$. Then the set \[A^\cH:=\{a\in A\;\vert\;h.a=\epsilon(h)a\text{ for all }h\in k\cH\}\] of $\cH$-invariants
is a subalgebra of $A$.
As $\cH$ is finite, the subalgebra is also finitely generated and $X/\cH=\maxspec A^\cH$. \\
Let $A=\bigoplus_{n\geq 0}A_n$ be additionally graded with $A_0=k$ and $A_+=\bigoplus_{n> 0}A_n$ be its irrelevant ideal. 
Then the set $X=\proj A$ of maximal homogeneous ideals which do not contain $A_+$ is a projective variety.
If $\cH$ acts on $X$, then $X/\cH=\proj A^\cH$.
\end{example}
Let $X$ be a variety with structure sheaf $\mathcal{O}_X$. For a point $x\in X$ we denote by $\mathcal{O}_{X,x}$ the local ring at the point $x$.
We say that a point $x\in X$ is simple, if its local ring $\cO_{X,x}$ is regular.
If $R$ is a commutative local ring, we will denote by $\hat{R}$ its completion at its unique maximal ideal.
Let $V$ be a $k$-vector space of dimension $n$ with basis $b_1,\dots,b_n$. Let $t_1,\dots,t_n$ be the corresponding dual basis of $V^*$.
We define the ring of polynomial functions of $V$ as $k[V]:=k[t_1,\dots,t_n]$. Its completion $k[[t_1,\dots,t_n]]$ at the maximal ideal
$(t_1,\dots,t_n)$ will be denoted by $k[[V]]$.\\
For future reference we will recall the following facts:

\begin{rmk}
Let $X$ be an $n$-dimensional variety and $x\in X$.
\begin{enumerate}[label={(\roman*)},ref={\thecor~(\roman*)}]
\item  A point $x\in X$ is simple if and
only if $\hat{\mathcal{O}}_{X,x}\cong k[[x_1,\dots,x_n]]$. \label{local ring of smooth point}
\item Let $\cH$ be a finite group scheme acting on $X$ and assume that there is a geometric quotient $(Y,q)$ of this action.
Then $\hat{\mathcal{O}}_{Y,q(x)}\cong(\hat{\mathcal{O}}_{X,x})^{\cH_x}$ (\cite[Exercise 4.5(ii)]{benmoonen}). \label{local ring for quotient}
\item Let $\cH$ be a linearly reductive group scheme acting on $k[[x_1,\dots,x_n]]$ via algebra automorphisms. Then there is an $n$-dimensional
$\cH$-module $V$ such that there is an $\cH$-equivariant isomorphism $k[[V]]\cong k[[x_1,\dots,x_n]]$ of $k$-algebras. 
(c.f. \cite[Proof of 1.8]{satriano2012chevalley})
\label{linear action on formal series ring}
\end{enumerate}
\end{rmk}
%
\begin{Def}
Let $f:X\rightarrow Y$ be a finite morphism, $x\in X$ and $y=f(x)$. Let $\fm_y$ be the maximal ideal of the local ring $\cO_{Y,y}$
Then $e_x(f):=\dim_k\cO_{X,x}/\fm_y\cO_{X,x}$ is called the ramification index of $f$ at $x$.
\end{Def}
As the morphism $f$ is finite, the induced homomorphism $\cO_{Y,y}\rightarrow\cO_{X,x}$ endows $\cO_{X,x}$ with the structure
of a finitely generated $\cO_{Y,y}$-module. Therefore the number $e_x(f)$ is finite.\\

For $n\in\N$, let $\mu_{(n)}$ be the finite group scheme with $\mu_{(n)}(R)=\{x\in R\;\vert\;x^n=1\}$
for every commutative $k$-algebra $R$. For a finite group scheme $\cH$ we denote by $\vert\cH\vert:=\dim_kk\cH$ its order.
\begin{lem}\label{ramification and stabilizer}
Let $\cH$ be a finite linearly reductive group scheme which acts faithfully on a $1$-dimensional irreducible quasi-projective variety $X$.
Denote by $q:X\rightarrow X/\cH$ the quotient morphism.
If $x\in X$ is a simple point, then $\cH_x\cong\mu_{(n)}$ for some $n\in\N$ and $e_x(q)=\vert \cH_x\vert$. 
\end{lem}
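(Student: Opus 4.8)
The plan is to reduce everything to a local computation in the completed local ring at the simple point $x$, and to use the structure theory of finite linearly reductive group schemes together with the description of the geometric quotient given in Remark~\ref{local ring for quotient}.

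First I would observe that $x$ is a fixed point of the stabilizer $\cH_x$ by definition, so $\cH_x$ acts on the local ring $\cO_{X,x}$ and hence on its completion $\hat\cO_{X,x}$ by algebra automorphisms. Since $x$ is a simple point of the $1$-dimensional variety $X$, Remark~\ref{local ring of smooth point} gives $\hat\cO_{X,x}\cong k[[t]]$, a formal power series ring in one variable. Because $\cH$ is linearly reductive, so is its subgroup scheme $\cH_x$; thus by Remark~\ref{linear action on formal series ring} there is a $1$-dimensional $\cH_x$-module $V$ with an $\cH_x$-equivariant isomorphism $k[[V]]\cong\hat\cO_{X,x}$. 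A $1$-dimensional representation of $\cH_x$ is a character $\chi\in X(\cH_x)$, and the action on $k[[t]]$ is then $h.t=\chi(h)t$. The kernel of this character acts trivially on $k[[t]]$, hence trivially on $\cO_{X,x}$; since $X$ is irreducible and $1$-dimensional, an element of $\cH$ acting trivially on an open neighbourhood of $x$ acts trivially on all of $X$, so faithfulness of the $\cH$-action forces $\chi$ to be injective, i.e. $\cH_x$ embeds into $\mathbb{G}_m$ via $\chi$. A finite subgroup scheme of $\mathbb{G}_m$ is of the form $\mu_{(n)}$ for some $n$ (this is standard; the character group of a finite linearly reductive, hence multiplicative, infinitesimal-or-\'etale group scheme embedded in $\mathbb{G}_m$ is cyclic of order $n$), so $\cH_x\cong\mu_{(n)}$.

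Next I would compute the ramification index. By Remark~\ref{local ring for quotient}, $\hat\cO_{X/\cH,q(x)}\cong(\hat\cO_{X,x})^{\cH_x}\cong k[[t]]^{\mu_{(n)}}$, where $\mu_{(n)}$ acts by $t\mapsto \zeta t$ for the tautological character $\zeta$. The invariants of this action are exactly $k[[t^n]]$: a monomial $t^m$ is invariant iff $\zeta^m=1$ as a function on $\mu_{(n)}$, i.e. iff $n\mid m$. Since the ramification index is insensitive to completion — $e_x(q)=\dim_k \cO_{X,x}/\fm_{q(x)}\cO_{X,x}=\dim_k\hat\cO_{X,x}/\fm_{q(x)}\hat\cO_{X,x}$ — we get $e_x(q)=\dim_k k[[t]]/(t^n)=n$. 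Finally $|\cH_x|=\dim_k k\mu_{(n)}=n$ (the group algebra $k\mu_{(n)}$ is $n$-dimensional, being dual to the $n$-dimensional coordinate ring $k[\mu_{(n)}]=k[x]/(x^n-1)$). Hence $e_x(q)=n=|\cH_x|$, as claimed.

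The main obstacle I anticipate is the justification that faithfulness of the $\cH$-action on $X$ forces the character $\chi:\cH_x\to\mathbb{G}_m$ to be injective: one must argue carefully that a subgroup scheme of $\cH$ acting trivially on the formal (or local) neighbourhood of $x$ acts trivially on the whole irreducible variety $X$, which requires knowing that the action morphism $\cH\times X\to X$ is determined by its behaviour near $x$ — here irreducibility and reducedness of $X$, together with the fact that a morphism of reduced schemes agreeing with the identity on a dense open is the identity, do the work, though for infinitesimal $\cH$ one should phrase this via the functor-of-points description of the stabilizer and trivial action. A secondary subtlety is making sure the $\cH_x$-equivariant linearization of Remark~\ref{linear action on formal series ring} is applied to $\cH_x$ (which is linearly reductive as a subgroup scheme of the linearly reductive $\cH$) rather than to $\cH$ itself; once that is in place the rest is the routine invariant-theory computation sketched above.
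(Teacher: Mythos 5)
Your proposal is correct and follows essentially the same route as the paper: pass to the completed local ring $\hat{\cO}_{X,x}\cong k[[T]]$, linearize the $\cH_x$-action via the cited remark, use faithfulness and irreducibility to conclude $\cH_x\cong\mu_{(n)}\subseteq GL_1$, and compute $e_x(q)$ from $k[[T]]^{\mu_{(n)}}=k[[T^n]]$ together with $\hat{\cO}_{X/\cH,q(x)}\cong\hat{\cO}_{X,x}^{\cH_x}$. The only cosmetic difference is in the faithfulness step, where the paper passes through the function field $k(X)=\operatorname{Frac}(\cO_{X,x})$ rather than your ``trivial on a dense open'' phrasing, but the substance is identical.
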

\begin{proof}
Since $x$ is a simple point and $X$ is $1$-dimensional, \ref{local ring of smooth point} yields a $\cH_x$-equivariant
isomorphism $\hat{\cO}_{X,x}\cong k[[T]]$. By \ref{linear action on formal series ring}, we can assume that the one-dimensional $k$-vector space
$\langle T\rangle_k$  is an $\cH_x$-module. 
As $X$ is irreducible, the field of fractions of $\cO_{X,x}$ is the function field $k(X)$ of $X$. If $\cK$ is the kernel of the action of $\cH_x$
on ${\cO}_{X,x}$, then it also acts trivially on $k(X)$ and therefore also on $X$. As $\cH$ acts faithfully on $X$, it follows that
$\cK$ is trivial. Therefore, $\langle T\rangle_k$ is a faithful $\cH_x$-module and we can assume $\cH_x=\mu_{(n)}\subseteq GL_1$ where $n=\vert \cH_x\vert$.
As $\cH_x$ acts via algebra automorphisms, this yields $k[[T]]^{\cH_x}=k[[T^n]]$. By \ref{local ring for quotient}, we have
$\hat{\cO}_{X/\cH,q(x)}\cong\hat{\cO}_{X,x}^{\cH_x}$.
As a result, we obtain
\[e_x(q)=\dim_k \hat{\cO}_{X,x}/\fm_{q(x)}\hat{\cO}_{X,x}=\dim_k k[T]/(T^n)=n=\vert \cH_x\vert.\]
\end{proof}

Let $k$ be an algebraically closed field of characteristic $p>0$ and $\mathcal{G}$ be a finite group scheme. Denote by $\V_\G$ the cohomological support variety of $\mathcal{G}$. By definition
$\mathcal{V}_{\mathcal{G}}=\maxspec H^\bullet(\mathcal{G},k)$ is the spectrum of maximal ideals of the
even cohomology ring $H^\bullet(\mathcal{G},k)$ of $\mathcal{G}$ (in characteristic $2$ one takes instead the whole cohomology ring).
The projectivization of the cohomological support variety will be denoted by $\mathbb{P}(\V_\G)$.
For every finite-dimensional $\G$-module $M$ there is a natural homomorphism $\Phi_M:H^\bullet(\G,k)\rightarrow\Ext_{\G}^*(M,M)$ of graded $k$-algebras.
The cohomological support variety of $M$ is then defined as the subvariety
$\mathcal{V}_{\mathcal{G}}(M)=\maxspec (H^\bullet(\mathcal{G},k)/\ker\Phi_M)$ of $\mathcal{V}_\G$. \\
For a subgroup scheme $\mathcal{H}$ of $\G$ let $\iota_{*,\mathcal{H}}:\P(\V_{\mathcal{H}})\rightarrow\P(\V_{\G})$ 
be the morphism which is induced by the canonical inclusion $\iota:k\mathcal{H}\rightarrow k\mathcal{G}$. 
If $\cN$ is a normal subgroup scheme of $\G$ then
$\G/\cN$ acts via automorphisms of graded algebras on $H^\bullet(\cN,k)$ and therefore on $\P(\V_\cN)$.
\begin{prop}\label{quotient of support variety}
Let $\G$ be a finite group scheme and $\cN$ be a normal subgroup scheme of $\G$ such that $\G/\cN$ is linearly reductive.
Then $(\P(\V_\G),\iota_{*,\cN})$ is a geometric quotient for the action of $\G/\cN$ on $\P(\V_{\cN})$.
\end{prop}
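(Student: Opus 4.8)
The plan is to reduce the statement to a purely ring-theoretic computation about the $H^\bullet(\cN,k)$ with its $\G/\cN$-action, and then invoke the fact that for a linearly reductive group scheme acting on a finitely generated graded algebra, taking invariants realizes the geometric quotient. First I would recall from the Example after the definition of geometric quotient that if $\cH$ is a finite group scheme acting on a graded finitely generated $k$-algebra $A=\bigoplus_{n\ge 0}A_n$ with $A_0=k$, then $\proj A^\cH$ is a geometric quotient of $\proj A$ by $\cH$; since $\G/\cN$ is finite (it is a quotient of the finite group scheme $\G$) and linearly reductive, it suffices to identify $H^\bullet(\G,k)$ with the invariant subalgebra $H^\bullet(\cN,k)^{\G/\cN}$ in a way compatible with the morphism $\iota_{*,\cN}$. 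So the heart of the argument is the claim that the restriction map $\res\colon H^\bullet(\G,k)\to H^\bullet(\cN,k)$ is injective with image exactly $H^\bullet(\cN,k)^{\G/\cN}$.

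For the image-equals-invariants claim I would use the Lyndon--Hochschild--Serre spectral sequence
\[
E_2^{s,t}=H^s(\G/\cN,H^t(\cN,k))\;\Longrightarrow\;H^{s+t}(\G,k).
\]
Because $\G/\cN$ is linearly reductive, all higher cohomology of $\G/\cN$ with coefficients in any rational module vanishes, so $E_2^{s,t}=0$ for $s>0$ and the spectral sequence collapses to the edge line $E_2^{0,t}=H^t(\cN,k)^{\G/\cN}$. This immediately gives $H^t(\G,k)\cong H^t(\cN,k)^{\G/\cN}$, and one checks that under this isomorphism the map is precisely $\res=\iota^*$ (the edge homomorphism of the spectral sequence is the restriction map), and that it is an isomorphism of graded algebras, not just of graded vector spaces, since the multiplicative structure on the $E_\infty$-page along the edge line agrees with the cup product. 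Restricting to the even part (or the whole ring in characteristic $2$) gives $H^\bullet(\G,k)\xrightarrow{\ \sim\ }H^\bullet(\cN,k)^{\G/\cN}$ as graded commutative $k$-algebras.

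Combining the two pieces: the inclusion $\iota\colon k\cN\hookrightarrow k\G$ induces on even cohomology the algebra isomorphism $H^\bullet(\G,k)\cong H^\bullet(\cN,k)^{\G/\cN}$, hence after $\maxspec$ a closed immersion-turned-isomorphism $\V_\G\cong \V_\cN/(\G/\cN)$ at the affine level, and after projectivization the morphism $\iota_{*,\cN}\colon \P(\V_\cN)\to\P(\V_\G)$ is identified with the quotient morphism $\proj H^\bullet(\cN,k)\to\proj H^\bullet(\cN,k)^{\G/\cN}$, which is the geometric quotient by the Example. One should also note the $\G/\cN$-action on $\P(\V_\cN)$ used here is exactly the one described before the proposition (induced by functoriality of cohomology in the normal subgroup), so the identification is of the actions as well, not merely the underlying varieties. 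I expect the main obstacle to be the bookkeeping around the spectral sequence: verifying carefully that it is multiplicative, that the edge map is genuinely the restriction homomorphism, and that everything is compatible with the grading and with passage to the even subring in odd characteristic; the linear reductivity of $\G/\cN$ does all the real work of killing the higher rows, so once that collapse is in hand the rest is formal.
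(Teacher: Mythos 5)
Your argument is correct and is essentially the paper's own proof: linear reductivity of $\G/\cN$ collapses the Lyndon--Hochschild--Serre spectral sequence so that restriction gives the algebra isomorphism $H^\bullet(\G,k)\cong H^\bullet(\cN,k)^{\G/\cN}$, and then the geometric-quotient statement follows from the fact (recorded in the Example) that for a finite group scheme acting on $\proj A$ the quotient is $\proj A^{\cH}$. Your extra remarks on multiplicativity of the spectral sequence and the edge map being $\iota^*$ just spell out details the paper leaves implicit.
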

\begin{proof}
Since $\G/\cN$ is linearly reductive, the Lyndon-Hochschild-Serre spectral sequence
(\cite[I.6.6(3)]{jantzen2007representations}) yields an isomorphism
$\iota^\bullet:H^\bullet(\G,k)\rightarrow H^\bullet(\cN,k)^{\G/\cN}$. 
Therefore, $(\P(\V_\G),\iota_{*,\cN})$ is a geometric quotient for the action of $\G/\cN$ on $\P(\V_{\cN})$.
\end{proof}
%
%
Let $H$ be a group. A $k$-algebra $A$ is called strongly $H$-graded if it admits a decomposition $A=\bigoplus_{g\in H}A_g$ such that
$A_gA_h=A_{gh}$ for all $g,h\in H$. If $U\subseteq H$ is a subgroup, the subalgebra $A_U:=\bigoplus_{g\in U}A_g$
is a strongly $U$-graded $k$-algebra. Let $N\subseteq H$ be a normal subgroup of $H$. Then $A$ can be regarded as a strongly $H/N$-graded
$k$-algebra via $A_{gN}:=\bigoplus_{x\in gN}A_x$ for all $g\in H$. \\
Let $M$ be an $A_1$-module. For $g\in H$ we denote by $M^g$ the $A_1$-module with same underlying space $M$ and $A_1$-action twisted by $g^{-1}$.
We will call the subgroup $G_M:=\{g\in H\;\vert\;M^g\cong M\}$ the stabilizer of $M$.

Let $\G$ be a finite group scheme, $\cN\subseteq \G$ be a normal subgroup scheme with $\G^0\subset \cN$ and set $G:=(\G/\cN)(k)$.
Due to the decomposition $\G\cong\G^0\rtimes\G_{red}$, the group algebra $k\G$ is isomorphic to the skew group algebra $k\G^0*\G(k)$.
As $\G^0\subseteq \cN$, we therefore obtain that $k\G$ has the structure of $G$-graded $k$-algebra with $(k\G)_1=k\cN$.
Let $M$ be an $\cN$-module. Then the Hopf-subalgebra $(k\G)_{G_M}$ of $k\G$ determines a unique subgroup scheme $\G_M$ of $\G$ with $k\G_M=(k\G)_{G_M}$.\\

The group $G$ acts on the module category $\mod \cN$ via equivalences of categories
$\mod \cN\rightarrow\mod \cN, M\mapsto M^g$ for $g\in G$. Since these equivalences commute with the Auslander-Reiten translation of $\Gamma_s(\cN)$,
each $g\in G$ induces an automorphism $t_g$ of the quiver $\Gamma_s(\cN)$. Therefore, 
$G$ acts on the set of components of $\Gamma_s(\cN)$. For a component $\Theta$ we write $\Theta^g=t_g(\Theta)$ and let 
$G_\Theta=\{g\in G\:\vert\: \Theta^g=\Theta\}$ be the stabilizer of $\Theta$.
If $M$ is an $\cN$-module which belongs to the component $\Theta$, then $G_M\subseteq G_\Theta$.
As above, there is a unique subgroup scheme $\G_\Theta\subseteq\G$ with $k\G_\Theta=(k\G)_{G_\Theta}$.
\\
Now let $N$ be an indecomposable non-projective $\cN$-module and $\Xi$ the corresponding component in $\Gamma_s(\cN)$. Assume there is an
indecomposable non-projective direct summand $M$ of $\ind_\cN^\G N$ and let $\Theta$ be the corresponding component in $\Gamma_s(\G)$.
\\

A stable translation quiver with tree class $A_\infty$ has for each vertex $M$ only one sectional path to the end of the component
(\cite[(VII)]{auslander1997representation}). The length of this path is called the quasi-length $\ql(M)$ of $M$.
A stable translation quiver of the form $\Z[A_\infty]/(\tau^n)$, $n\geq 1$,
is called a tube of rank $n$. These components contain for each $l\geq1$ exactly $n$ modules of quasi-length $l$.
Tubes of rank $1$ are also called homogeneous tubes and all other tubes are called exceptional tubes.\\
If $A$ and $B$ are $\G$-modules which belong to the same AR-component $\Upsilon$, then $\V_\G(A)=\V_\G(B)$
(c.f. \cite[3.1]{farnsteiner2007support}). Therefore we can define $\V_\G(\Upsilon):=\V_\G(A)$ for some $\G$-module $A$ belonging to $\Upsilon$.
If $\Upsilon$ is a tube, then $\vert\P(\V_\G(\Upsilon))\vert=1$ (c.f. \cite[3.3(3)]{farnsteiner2007support}). 
Thanks to \cite[5.6]{friedlander2005representation}, we have $\iota_{*,\cN}^{-1}(\P(\V_\G(M)))=\P(\V_\cN(\res_\cN^\G M))$.
By \cite[4.5.8]{marcus1999representation}, the module $\res_\cN^\G M$ has an indecomposable direct summand which
belongs to $\Xi$. Hence, 
\[x_\Xi\in\iota_{*,\cN}^{-1}(\P(\V_\G(M)))=\iota_{*,\cN}^{-1}(x_\Theta),\]
so that $\iota_{*,\cN}(x_\Xi)=x_\Theta$.
\begin{prop}
Let $\Xi$ be a tube of rank $n$ and $\Theta$ be a tube of rank $m$. Assume that $\G^0\subseteq \cN$ and set
$G:=(\G/\cN)(k)$. Moreover, assume the following
\begin{enumerate}
	\item $\G/\cN$ is linearly reductive, i.e. $p$ does not divide the order of $G$,
	\item $G$ acts faithfully on $\P(\V_\cN)$, 
	\item the variety $\P(\V_\cN)$ is one-dimensional and irreducible,
	\item $x_\Xi$ is a simple point of $\P(\V_\cN)$, and
	\item all modules belonging to $\Xi$ are $\G_\Xi$ stable.
\end{enumerate}
Then $m\leq e_{x_\Xi}(\iota_{*,\cN})n$.
\end{prop}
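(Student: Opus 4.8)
The plan is to compare the $\tau$-periods of $N$ and $M$. In a tube $\Z/(r)[A_\infty]$ every vertex is $\tau$-periodic with period exactly $r$ (the $r$ modules of each fixed quasi-length form a single $\tau$-orbit); hence $\tau_\cN^{\,n}N\cong N$, while the $\tau_\G$-period of $M$ equals $m$. The first step is to show that $\ind_\cN^\G$ commutes with the Auslander--Reiten translation. Being exact and, since $k\G$ is a free $k\cN$-module, preserving projectives and sending indecomposable non-projectives to modules without projective summands, $\ind_\cN^\G$ commutes with $\Omega^2$ on non-projective indecomposables; and since $\G/\cN$ is linearly reductive, $k\G\colon k\cN$ is a free Frobenius extension of the first kind (as in the proof of Proposition~\ref{ind is direct sum of AR-seq}, via \cite{takeuchi1981hopf} and \cite{morita1965adjoint}), so $\ind_\cN^\G$ also commutes with the Nakayama functor. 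Thus $\ind_\cN^\G\tau_\cN\cong\tau_\G\ind_\cN^\G$, and therefore
\[\tau_\G^{\,n}\bigl(\ind_\cN^\G N\bigr)\cong\ind_\cN^\G\bigl(\tau_\cN^{\,n}N\bigr)\cong\ind_\cN^\G N.\]
By Krull--Schmidt, $\tau_\G^{\,n}$ permutes the isomorphism classes of indecomposable direct summands of $\ind_\cN^\G N$ (none of which is projective, since $N$ is not); if $t$ is the length of the $\tau_\G^{\,n}$-orbit of the summand $M$, then $\tau_\G^{\,nt}M\cong M$, so $m\mid nt$ and in particular $m\le nt$.

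Next I would bound the number of indecomposable summands of $\ind_\cN^\G N$. As $\G^0\subseteq\cN$, the algebra $k\G$ is strongly $G$-graded with $(k\G)_1=k\cN$ for $G=(\G/\cN)(k)$, and $p\nmid|G|$. Clifford theory for this grading (cf.~\cite{marcus1999representation}) gives $\ind_\cN^\G N\cong\ind_{\G_N}^\G(\ind_\cN^{\G_N}N)$, where the indecomposable summands of $\ind_\cN^{\G_N}N$ correspond to the simple modules of $\End_{\G_N}(\ind_\cN^{\G_N}N)$ --- a crossed product of the local algebra $\End_\cN(N)$ with $G_N$, hence with at most $|G_N|$ isomorphism classes of simple modules --- while induction from the stabilizer $\G_N$ sends non-isomorphic indecomposable summands lying over $N$ to non-isomorphic indecomposable $\G$-modules. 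So $\ind_\cN^\G N$ has at most $|G_N|\le|G_\Xi|$ non-isomorphic indecomposable direct summands, the last inequality because $N\in\Xi$ (hypothesis~(5) in fact forces $G_N=G_\Xi$). Combined with the first step, $t\le|G_\Xi|$ and hence $m\le|G_\Xi|\,n$.

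It remains to identify $|G_\Xi|$ with a ramification index. The $G$-action on $\Gamma_s(\cN)$ is compatible with the $G$-action on $\V_\cN$ of Proposition~\ref{quotient of support variety}, namely $\V_\cN(X^g)=g\cdot\V_\cN(X)$; since all modules in $\Xi$ have the same support variety and $\P(\V_\cN(\Xi))=\{x_\Xi\}$ (\cite{farnsteiner2007support}), every $g\in G_\Xi$ fixes $x_\Xi$, so $G_\Xi\subseteq G_{x_\Xi}$. By Proposition~\ref{quotient of support variety}, the finite morphism $\iota_{*,\cN}$ is the quotient of $\P(\V_\cN)$ by $G$; hypotheses~(1)--(4) let us apply Lemma~\ref{ramification and stabilizer} at the simple point $x_\Xi$, which gives $e_{x_\Xi}(\iota_{*,\cN})=|G_{x_\Xi}|$. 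Putting everything together,
\[m\ \le\ |G_\Xi|\,n\ \le\ |G_{x_\Xi}|\,n\ =\ e_{x_\Xi}(\iota_{*,\cN})\,n,\]
which is the assertion.

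The main obstacle is the first step: the commutation $\ind_\cN^\G\nu_\cN\cong\nu_\G\ind_\cN^\G$ with the Nakayama functor genuinely uses that $k\G\colon k\cN$ is a Frobenius extension of the \emph{first kind}, not merely a free extension, and the Clifford-theoretic bookkeeping bounding the indecomposable summands of $\ind_\cN^\G N$ must be done carefully (in particular the fact that induction from the stabilizer preserves indecomposability and non-isomorphism over $N$). Once these two points are in place, the geometric third step is a direct application of Lemma~\ref{ramification and stabilizer} and Proposition~\ref{quotient of support variety}.
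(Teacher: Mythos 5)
Your argument is correct in outline, but it reaches the inequality by a genuinely different route than the paper. The geometric half coincides: both proofs combine Proposition~\ref{quotient of support variety} with Lemma~\ref{ramification and stabilizer} to get $e_{x_\Xi}(\iota_{*,\cN})=\vert G_{x_\Xi}\vert$, and your intermediate step $G_\Xi\subseteq G_{x_\Xi}$ (via the equivariance $\P(\V_\cN(X^g))=g.\P(\V_\cN(X))$ together with $\vert\P(\V_\cN(\Xi))\vert=1$, cf.\ \cite{farnsteiner2007support}) is exactly what links the module-theoretic stabilizer to the point stabilizer. For the representation-theoretic half, however, the paper simply quotes \cite[6.3(d)]{kirchhoff2015domestic} -- this citation is the reason hypothesis (5) and the cyclicity of $G_{x_\Xi}$ are recorded in the statement -- whereas you prove the bound directly: $\ind_\cN^\G$ commutes with $\tau$ (using that $k\G:k\cN$ is a free Frobenius extension of the first kind, as in the proof of Proposition~\ref{ind is direct sum of AR-seq}), so $\tau_\G^n$ permutes the indecomposable summands of $\ind_\cN^\G N$, and Clifford theory for the strongly $G$-graded algebra $k\G$ bounds the number of summand classes by $\vert G_N\vert\le\vert G_\Xi\vert$, giving $m\le\vert G_\Xi\vert n\le\vert G_{x_\Xi}\vert n$. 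What your route buys is a self-contained argument that apparently does not use hypothesis (5) or cyclicity (these are presumably needed in \cite[6.3]{kirchhoff2015domestic} for sharper conclusions such as exact ranks); what it costs is that the two steps you flag must really be written out: the isomorphism $\nu_\G\circ\ind_\cN^\G\cong\ind_\cN^\G\circ\nu_\cN$ does hold, because freeness and the first-kind bimodule isomorphism $k\G\cong\Hom_\cN(k\G,k\cN)$ yield $D(k\G)\cong\Hom_\cN(k\G,D(k\cN))\cong k\G\otimes_{k\cN}D(k\cN)$ as $(k\G,k\cN)$-bimodules; one must also note that $\ind_\cN^\G N$ and $\ind_\cN^\G\Omega_\cN^2N$ have no projective direct summands (their restrictions to $\cN$ are direct sums of twists of $N$, resp.\ of $\Omega^2_\cN N$), so the commutations with $\Omega^2$ and $\nu$ hold on the nose and give $\tau_\G(\ind_\cN^\G N)\cong\ind_\cN^\G(\tau_\cN N)$ as modules; and the Clifford-theoretic count needs the correspondence from \cite{marcus1999representation} ensuring that every indecomposable summand of $\ind_\cN^\G N$ is induced from a summand of $\ind_\cN^{\G_N}N$, whose endomorphism algebra is the crossed product $\End_\cN(N)*G_N$ with at most $\vert G_N\vert$ simple modules. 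With these details supplied, your proof is a valid alternative to the paper's citation-based argument.
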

\begin{proof}
	By the above, the group algebra $k\G$ is a strongly $G$-graded $k$-algebra with $(k\G)_1=k\cN$.
	As $G$ acts faithfully on the one-dimensional irreducible variety $\P(\V_\cN)$ with simple point $x_\Xi$, 
	we obtain due to \ref{ramification and stabilizer} that the stabilizer $G_{x_\Xi}$ is a cyclic group and 
	we have the equality $e_{x_\Xi}(\iota_{*,\cN})=\vert G_{x_\Xi}\vert$.
	Now the assertion follows directly from \cite[6.3(d)]{kirchhoff2015domestic}.
\end{proof}
Let $\cZ$ be the center of the group scheme $SL(2)$ and $\tilde{\G}$ a finite linearly reductive subgroup scheme 
of $SL(2)$ with $\cZ\subseteq\tilde{\G}$. 
As mentioned in section 2, every domestic finite group scheme can be associated to one of the
amalgamated polyhedral group schemes $(SL(2)_1\tilde{\G})/\cZ$. There are 5 different possible classes for the choice of $\tilde{\G}$ 
(for details see \cite[3.3]{farnsteiner2006polyhedral}):
\begin{enumerate}
\item binary cyclic group scheme,
\item binary dihedral group scheme,
\item binary tetrahedral group scheme,
\item binary octahedral group scheme and
\item binary icosahedral group scheme.
\end{enumerate}
The latter three are always reduced group schemes.
\begin{prop}
	Let $\G$ be an amalgamated polyhedral group scheme, $\cN:=\G_1$ its first Frobenius kernel and $\Theta$ be a tube. 
	Then $\Theta$ has rank $e_{x_\Xi}(\iota_{*,\cN})$.
\end{prop}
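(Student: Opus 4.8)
The plan is to pass to $\G=\hat{\G}=SL(2)_1\tilde{\G}$, to observe that there $\cN=\G_1$ equals $SL(2)_1$ so that the component $\Xi$ is forced to be homogeneous, to deduce from the previous proposition that $\Theta$ has rank at most $e_{x_\Xi}(\iota_{*,\cN})$, and to obtain the matching lower bound from \ref{ramification and stabilizer} together with the Clifford-theoretic description of $\Theta$. First I would reduce to $\hat{\G}$: by \cite[7.3.2, 1.1]{farnsteiner2006polyhedral} all non-simple blocks of $k\G$ (and tubes live only in such) are Morita equivalent to $\cB_0(\hat{\G})$, and since $\cZ$ is central in $SL(2)$ the Lyndon-Hochschild-Serre argument of \ref{quotient of support variety} identifies $H^\bullet(\G,k)$ with $H^\bullet(\hat{\G},k)$ and $\V_{\G_1}$ with $\V_{\hat{\G}_1}$, so $\iota_{*,\cN}$ and its ramification indices are unchanged. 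Because $\hat{\G}_1=\hat{\G}\cap SL(2)_1$ and $\tilde{\G}\cap SL(2)_1\subseteq SL(2)_1$, one gets $\hat{\G}_1=SL(2)_1\cdot(\tilde{\G}\cap SL(2)_1)=SL(2)_1$; thus $\cN=\G_1=SL(2)_1$, whose stable Auslander-Reiten quiver has only homogeneous tubes. As $\Theta$ is a tube, any $M\in\Theta$ has complexity $1$, so an indecomposable summand $N$ of $\res_\cN^\G M$ has complexity $1$, hence lies in a homogeneous tube $\Xi$ with $\iota_{*,\cN}(x_\Xi)=x_\Theta$ (the paragraph before the previous proposition). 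So $\Xi$ has rank $1$, and it remains to show that $\Theta$ has rank $e_{x_\Xi}(\iota_{*,\cN})$.

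Next I would check the hypotheses of the previous proposition with $G=(\G/\cN)(k)$. Hypothesis (1) holds since $\G/\cN\cong\tilde{\G}/(\tilde{\G}\cap SL(2)_1)$ is a quotient of the linearly reductive $\tilde{\G}$. For (3) and (4): $\V_\cN=\V_{SL(2)_1}$ is the nilpotent cone of $\sl2$, a two-dimensional quadric cone, so $\P(\V_\cN)\cong\mathbb{P}^1$ is one-dimensional and irreducible with all points simple. For (2): $SL(2)_1\tilde{\G}$ acts on $\V_{SL(2)_1}$ through the adjoint action of $SL(2)$ on $\sl2$, whose kernel on the projectivised cone is $\cZ$; since $\cZ\subseteq\tilde{\G}$ acts trivially, dividing it out yields a faithful action, which for the associated amalgamated polyhedral group scheme (with $\cZ$ already factored out) is the action of $\G/\cN$ itself. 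For (5): if $g\in G_\Xi$ and $V$ lies in $\Xi$, then $V^g$ lies in $\Xi$ with $\ql(V^g)=\ql(V)$, and a homogeneous tube has a unique module of each quasi-length, whence $V^g\cong V$. The previous proposition then gives that $\Theta$ has rank at most $e_{x_\Xi}(\iota_{*,\cN})\cdot 1$.

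For the reverse inequality I would use \ref{ramification and stabilizer}: the faithful action of the linearly reductive $\G/\cN$ on the one-dimensional irreducible $\P(\V_\cN)$ at the simple point $x_\Xi$ shows $(\G/\cN)_{x_\Xi}\cong\mu_{(n)}$ is cyclic and $e_{x_\Xi}(\iota_{*,\cN})=|(\G/\cN)_{x_\Xi}|=|\G_\Xi/\cN|$, where $\G_\Xi\subseteq\G$ is the preimage of this stabilizer. Then one constructs a tube over $x_\Theta$ of rank $|\G_\Xi/\cN|$: the mouth module $N$ of $\Xi$ is $\G_\Xi$-stable by (5), and since $\G_\Xi/\cN$ is linearly reductive $N$ extends to a $\G_\Xi$-module $\hat{N}$; the induced module $M:=\ind_{\G_\Xi}^\G\hat{N}$ is indecomposable and, by \ref{ind is direct sum of AR-seq}, sits on the mouth of $\Theta$, so the rank of $\Theta$ is the length of its $\tau_\G$-orbit. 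Using $\tau_\G\circ\ind_{\G_\Xi}^\G=\ind_{\G_\Xi}^\G\circ\tau_{\G_\Xi}$ and $\tau_{\G_\Xi}=\Omega_{\G_\Xi}^2$ (both $k\G$ and $k\G_\Xi$ are symmetric), this orbit is governed by the one-dimensional $\G_\Xi/\cN$-module $\delta$ with $\Omega_{\G_\Xi}^2\hat{N}\cong\hat{N}\otimes_k\delta$ acting on the $X(\G_\Xi/\cN)$-torsor of extensions of $N$; combined with the fact that $\Gamma_s(SL(2)_1)$, hence $\Gamma_s(\G)$, has a single tube over $x_\Theta$, one concludes that $\Theta$ has rank $|\G_\Xi/\cN|=e_{x_\Xi}(\iota_{*,\cN})$. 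The content of this last step is \cite[6.3]{kirchhoff2015domestic}, together with the identification of the modules of $\Theta$ with twists $V(n,l)\otimes_kS_j$ from \cite[7.4]{kirchhoff2015domestic}.

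The hard part will be the reverse inequality: one must show that the $\tau_\G$-orbit has length exactly $|\G_\Xi/\cN|$ and not a proper divisor of it --- equivalently that $\delta$ generates the character group $X(\G_\Xi/\cN)$, or that the unique tube over $x_\Theta$ realises the full ramification of $\iota_{*,\cN}$ at $x_\Xi$ --- and this is precisely where the explicit structure of $\G_\Xi$ as a Hopf subalgebra of $k\G$ and of the modules $V(n,l)\otimes_kS_j$ enters. Minor technical points are the bookkeeping around the scalar subgroup $\cZ$ for hypothesis (2), and the fact that when $\tilde{\G}^0$ has height $r>1$ one has $\G^0\supsetneq\G_1$, so that the previous proposition must be invoked in its form for arbitrary linearly reductive quotients (as in \cite[6.3]{kirchhoff2015domestic}) rather than only for \'etale ones --- or else $\iota_{*,\cN}$ is factored through $\P(\V_{\G^0})$ and the infinitesimal multiplicative step $\G^0/\G_1\cong\mu_{(p^{r-1})}$ is treated separately, multiplying the ramification indices.
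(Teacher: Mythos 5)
Your proposal does not follow the paper's route, and it contains a genuine gap which you yourself flag at the end: the reverse inequality is never actually proved. Establishing that the rank of $\Theta$ equals $\vert\G_\Xi/\cN\vert=e_{x_\Xi}(\iota_{*,\cN})$ rather than a proper divisor of it --- in your formulation, that the character $\delta$ with $\Omega^2_{\G_\Xi}\hat{N}\cong\hat{N}\otimes_k\delta$ generates $X(\G_\Xi/\cN)$ --- is the whole content of the proposition beyond the upper bound, and deferring it to ``the explicit structure of the modules $V(n,l)\otimes_kS_j$'' and to \cite[6.3]{kirchhoff2015domestic} is precisely the step that is missing. The paper settles this point quite differently: for the amalgamated cyclic and the non-reduced binary dihedral group schemes it classifies the indecomposable modules (as in \cite[7.6]{kirchhoff2015domestic} and its cyclic analogue) and obtains the equality of rank and ramification index by directly comparing the numbers; for reduced $\tilde{\G}$ it invokes \cite[4.5.10]{marcus1999representation} to see that $\ind_{\G_\Xi}^{\G}$ induces an isomorphism of stable translation quivers from the component $\Lambda\subseteq\Gamma_s(\G_\Xi)$ containing a summand $U$ of $\res_{\G_\Xi}^\G M$ onto $\Theta$, and then reduces to the cyclic case since $\G_\Xi(k)$ is cyclic. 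In other words, the equality is extracted from classification data, not from an abstract torsor argument, and no amount of the upper-bound bookkeeping you carry out replaces that input.

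A second, related problem sits in your upper-bound step: the preceding proposition you invoke assumes $\G^0\subseteq\cN$, but with $\cN=\G_1$ this fails exactly when $\tilde{\G}$ is non-reduced of height $r>1$ (the amalgamated cyclic and dihedral families), since then $\G^0\supsetneq\G_1$. You notice this in your last paragraph and list possible repairs (a version of \cite[6.3]{kirchhoff2015domestic} for general linearly reductive quotients, or factoring $\iota_{*,\cN}$ through $\P(\V_{\G^0})$ and treating $\G^0/\G_1\cong\mu_{(p^{r-1})}$ separately), but none of these is carried out; the paper never faces the issue because precisely these cases are handled by the explicit classification. The parts of your argument that do work --- $\G_1\cong SL(2)_1$ so that $\Xi$ is a homogeneous tube, the verification of hypotheses (1)--(5) in the reduced cases, and the identification $e_{x_\Xi}(\iota_{*,\cN})=\vert(\G/\cN)_{x_\Xi}\vert$ via \ref{ramification and stabilizer} --- give only the inequality $\mathrm{rank}(\Theta)\leq e_{x_\Xi}(\iota_{*,\cN})$, so as it stands the proposal proves strictly less than the proposition claims.
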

\begin{proof}
	If $\tilde{\G}$ is a non-reduced binary dihedral group scheme, then one obtains this result directly from \cite[7.6]{kirchhoff2015domestic}
	by comparing the numbers. With the same methods as in the proof of \cite[7.6]{kirchhoff2015domestic} one can classify the indecomposable
	modules for the amalgamated cyclic group scheme. Again one obtains this result by comparing numbers. The other cases are either proved
	in the same way or with the following arguments:\\
	We can assume that $\tilde{\G}$ is reduced. Let $M$ be an indecomposable $\G$-module which belongs to $\Theta$ and
	$U$ be an indecomposable direct summand of $\res_{\G_\Xi}^\G M$ with $\ind_{\G_\Xi}^\G U=M$. Denote by $\Lambda$ the Auslander-Reiten component of $\Gamma_s(\G_\Xi)$
	which contains $U$.	Then \cite[4.5.10]{marcus1999representation} yields, that
	$\ind_{\G_\Xi}^\G:\Lambda\rightarrow\Theta$ is an isomorphism of stable translation quivers. Now the assertion follows from the cyclic case,
	as $\G_\Xi(k)$ is a cyclic group.
\end{proof}
\subsubsection*{Acknowledgement}
The results of this article are part of my doctoral thesis, which I am currently writing at the University
of Kiel. I would like to thank my advisor Rolf Farnsteiner for his continuous support as well as for helpful remarks.
Furthermore, I thank the members of my working group for proofreading.

\markboth{\nomname}{\nomname}

\footnotesize
\bibliographystyle{abbrv}
\bibliography{bib}
\noindent
\textsc{Christian-Albrechts-Universität zu Kiel, Ludewig-Meyn-Str. 4, 24098 Kiel, Germany}
\textit{E-mail address:} \textsf{kirchhoff@math.uni-kiel.de}
\end{document}